\newenvironment{manualtheorem}[1]{%
  \manualtheoreminner
}{\endmanualtheoreminner}
\newtheorem{lemma}{Lemma}[section]
\newtheorem{theorem}[lemma]{Theorem}
\theoremstyle{definition}
\newtheorem{supplement}[lemma]{Supplement}
\numberwithin{equation}{section}
\begin{document}
\title[Multiprojective bundles and Symmetric powers]{Isomorphisms and automorphisms of multiprojective bundles and symmetric powers of projective bundles}
\author[A. Bansal]{Ashima Bansal}
\address{Indian Institute of Science Education and Research Tirupati, Srinivasapuram, Yerpedu Mandal, Tirupati Dist, Andhra Pradesh, India – 517619.}
\email{ashimabansal@students.iisertirupati.ac.in}
\author[S. Sarkar]{Supravat Sarkar}
\address{\scshape Fine Hall, Princeton, NJ 700108}
\email{ss6663@princeton.edu}
\author[S. Vats]{Shivam Vats}
\address{}
\email{shivamvatsaaa@gmail.com}
\subjclass[2010]{14M99, 14J59}

\keywords{Multiprojective bundle, relative symmetric power, automorphism}
\begin{abstract}
We describe when two multiprojective bundles (fibre products of projective bundles over the same base) over projective spaces are isomorphic as abstract varieties. We also describe when two relative symmetric powers of projective bundles over projective spaces are isomorphic. Finally, we describe the automorphisms of multiprojective bundles and relative symmetric powers of projective bundles over projective spaces.
\end{abstract}
\maketitle
\section{Introduction}
\noindent Projective spaces and multiprojective spaces (products of projective spaces) are the simplest examples of complete algebraic varieties. Relativising the constructions of projective and multiprojective spaces, one gets the notion of projective bundles and multiprojective bundles, the latter being the fibre product of projective bundles over the same base. With these constructions in mind, one naturally leads to studying projective bundles and multiprojective bundles over projective spaces. A natural question in this area is when two multiprojective bundles over projective spaces are isomorphic. In \cite{Sa}, this question is answered for projective bundles. In {\cite[Theorem B]{BSV1}}, this question is answered for multiprojective bundles over different projective spaces. One of our goals in this article is to tackle the remaining case, that is, to characterize when two multiprojective bundles over the same projective space are isomorphic as abstract varieties.

Throughout, we work over the field $\mathbb{C}$ of complex numbers, although the proofs of Theorems \ref{A} and \ref{autmult} hold over any algebraically closed field. A \textit{variety} is an integral, finite type, separated scheme over $\mathbb{C}$. We follow the conventions of projectivizations of vector bundles as in {\cite[Chapter 2, Section 7]{Hart}}. A \textit{projective bundle} over a variety $X$ is the projectivization of a vector bundle $E$ over $X$, the projective bundle is called \textit{proper} if rank $E\,\geq\, 2$. For vector bundles $E_1, E_2,\cdots, E_r$ on a variety $X$, we denote the fibre product $ \mathbb{P}_X(E_{1}) \times_{X} \mathbb{P}_X(E_{2}) \times_{X}\cdots\times_{X }\mathbb{P}_X(E_{r})$ by $ \mathbb{P}_{X}(E_{1},\cdots,E_{r})$, and we call this a \textit{multiprojective bundle} over $X$. We will sometimes omit the subscript $X$ if it is clear from the context. For a variety $Y$, we denote the automorphism group of $Y$ by Aut$(Y)$. Throughout, for a positive integer $r$, $S_r$ denotes the symmetric group on $r$ letters.

Suppose $n,r,s $ are positive integers, $\underline{E}=(E_1, E_2,\cdots, E_r)$ and $\underline{F}=(F_1, F_2,\cdots, F_s)$ be tuples of vector bundles over $\mathbb{P}^n$, each of rank $\,\geq\, 2$. If $\mathbb{P}_{\mathbb{P}^n}(\underline{E}) \,\cong\, \mathbb{P}_{\mathbb{P}^n}(\underline{F})$ as abstract varieties, then a comparison of Picard ranks shows that $r=s$. Our result characterizes when $\mathbb{P}_{\mathbb{P}^n}(\underline{E})$ can be isomorphic to $\mathbb{P}_{\mathbb{P}^n}(\underline{F})$ as abstract varieties.

\begin{manualtheorem}{A}\label{A}
Let $n,r \,\geq\, 1$, $\underline{E}=(E_1, E_2,\cdots, E_r), \underline{F}=(F_1, F_2,\cdots, F_r)$ be $r$-tuples of vector bundles over $\mathbb{P}^n$, each of rank $\,\geq\, 2$. Suppose $\mathbb{P}_{\mathbb{P}^n}(\underline{E}) \,\cong\, \mathbb{P}_{\mathbb{P}^n}(\underline{F})$ as abstract varieties. Then there exist $\psi \,\in\, \textnormal{Aut}(\mathbb{P}^n), L_{i } \,\in\, \textnormal{Pic}(\mathbb{P}^n) $, and $ \sigma \,\in\, S_{r}$ such that $ E_{i} \,\cong\, \psi^{\ast}F_{\sigma(i)} \otimes L_{i}$ for all $i = 1,\cdots,r$.
\end{manualtheorem}
For a map of varieties $Y\to X$ and a positive integer $r$, let $S^r_X(Y)$ denote the $r$th relative symmetric power of $Y$ over $X$; that is, the quotient of the fibre product $ Y \times_{X} Y \times_{X}\cdots\times_{X }Y$ ($r$-times) by the action of the symmetric group $S_r$, which permutes the factors. While studying multiprojective bundles, a related concept is the notion of relative symmetric powers of projective bundles, which is, by definition, a quotient of a multiprojective bundle. A natural question is when two relative symmetric powers of projective bundles over projective spaces are isomorphic as abstract varieties. We answer this question in the following theorem. 
\begin{manualtheorem}{B}\label{sym}
Let $ p,q,r,s \,\geq\, 2,$ and $m,n \,\geq\, 1$
 be positive integers. Let $E$ be a vector bundle over $ \mathbb{P}^n$  of rank $p+1$, $F$ a vector bundle over $ \mathbb{P}^n$ of rank $ q+1$. Suppose there exists an isomorphism
 $$ S_{\mathbb{P}^n}^{r}\mathbb{P}(E)\,\xlongrightarrow{\phi}\, S_{\mathbb{P}^m}^s\mathbb{P}(F).$$ 
 Then $m\,=\,n,$ $r\,=\,s$, $ p\,=\,q$, and there exists $\psi \,\in\, \textnormal{Aut}(\mathbb{P}^n)$ such that $ \phi$ is over $ \psi$, and $ E \,\cong\, \psi^{\ast}F \otimes L$ for some line bundle $L$ over $\mathbb{P}^n.$
 \end{manualtheorem}
The automorphism group is a natural invariant of a variety. It gives information about the global symmetries of a variety. Automorphism groups of projective bundles have been studied by several authors  (see \cite{BFT}, \cite{Fa}, \cite{Fo}, \cite{Ko} and \cite{Ma}). However, automorphisms of multiprojective bundles and symmetric powers of projective bundles are relatively less studied. In this paper, we describe the automorphism groups of multiprojective bundles and symmetric powers of projective bundles over projective spaces. To state our results, we introduced a few notations. For a map of varieties $f\,:\,Y\,\to\, X$, let Aut$_X(Y)\,=\,\{\phi\in\text{ Aut}(Y)\,|\, f\,=\,f\circ\phi\}$, and $\textnormal{Aut}_{X\to X}(Y)\,=\,\{\phi\in\text{ Aut }(Y)\,|\,\text{ there is } \psi\in \text{Aut}(X) \text{ with } \psi\circ f\,=\,f\circ\phi\}.$  An automorphism $\phi$ of $Y$ is called \textit{over an automorphism of $X$} if $\phi\, \in\,$ $\textnormal{Aut}_{X\to X}(Y),$ and is called \textit{over $X$} if $\phi\, \in\,$ Aut$_X(Y)$. 

 Our results are:
 
\begin{manualtheorem}{C} \label{autmult}
Let $n, \ell ,a_{1},\cdots,a_{\ell}$ be positive integers, $ P_{1}, P_{2}, \cdots, P_{\ell}$ be proper projective bundles over $ \mathbb{P}^n$, and assume they are pairwise non-isomorphic over $ \mathbb{P}^n$. Also, assume that neither of the following conditions holds:
\begin{enumerate}
\item[$(1)$] All the bundles $P_{i}$'s are trivial.
\item[$(2)$] There is an $i$ with $ a_{i}\,=\,1$, $ P_{i} \,\cong\, \mathbb{P}_{\mathbb{P}^n}(T_{\mathbb{P}^n})$ over $ \mathbb{P}^n,$ and all other $P_{j}$ for all $ j \neq i$ are trivial.
\end{enumerate}
Let $ P\,=\, \prod_{i}P_{i}^{a_{i}},$ where $ \prod$ and powers are fiber products over $ \mathbb{P}^n.$  Let $H$ be the subgroup of all automorphisms $ \psi$ of $\mathbb{P}^n $ such that $ (\underbrace{P_{1},\cdots,P_{1}}_{a_{1}\text{-times}}, \underbrace{P_{2},\cdots,P_{2}}_{a_{2}\text{-times}},\cdots, \underbrace{P_{\ell},\cdots,P_{\ell})}_{a_{l}\text{-times}}$ is isomorphic over $\mathbb{P}^n$ to a permutation of $ (\underbrace{\psi^{\ast}P_{1},\cdots, \psi^{\ast}P_{1}}_{a_{1}\text{-times}}, \underbrace{\psi^{\ast}P_{2}, \cdots, \psi^{\ast}P_{2}}_{a_{2}\text{-times}}, \\\cdots, \underbrace{\psi^{\ast}P_{\ell},\cdots,\psi^{\ast}P_{\ell})}_{a_{l}\text{-times}}.$
Then, we have a short exact sequence of groups 
$$ 1 \,\longrightarrow\, \prod_{i}(\textnormal{Aut}_{\mathbb{P}^n} P_{i})^{a_{i}} \rtimes S_{a_{i}}) \,\longrightarrow\, \textnormal{Aut} \medspace P \,\longrightarrow\, H \,\longrightarrow\, 1.$$ Here the action of $S_{a_{i}}$ on $ (\textnormal{Aut}_{\mathbb{P}^n}P_{i})^{a_{i}}$ is by permutation of the factors.   
   
\end{manualtheorem}

\begin{manualtheorem}{D}\label{autsym}
Let $ n\,\geq\, 1$, and $ r\,\geq\, 2$ be integers, and let $P$ be a proper projective bundle over $ \mathbb{P}^n.$ Then there is an isomorphism $$ \textnormal{Aut}(S^{r}_{\mathbb{P}^n}P) \,\cong\, \textnormal{Aut}_{\mathbb{P}^n\rightarrow \mathbb{P}^n}(P).$$ 
\end{manualtheorem}

If condition $(1)$ or $(2)$ in Theorem \ref{autmult} holds, then $P$ is either a product of projective spaces or a product of projective spaces and $\mathbb{P}_{\mathbb{P}^n}(T_{\mathbb{P}^n})$. In this case, the automorphism group of $P$ is easy to describe (see Supplement \ref{supp}). 

Also note that one can further describe the group $ \textnormal{Aut}_{\mathbb{P}^n\rightarrow \mathbb{P}^n}(P)$ of Theorem \ref{autsym} by the following short exact sequence 
    $$ 1\,\longrightarrow\, \textnormal{Aut}_{\mathbb{P}^n}P \,\longrightarrow\, \textnormal{Aut}_{\mathbb{P}^n\rightarrow \mathbb{P}^n}(P) \,\longrightarrow\, H \,\longrightarrow\, 1,$$ 
    where 
    $$ H\,=\, \{ \psi \,\in\, \textnormal{Aut } \mathbb{P}^n \medspace | \medspace \psi^{\ast}P \,\cong\, P \hspace{2pt} \textnormal{over } \hspace{2pt} \mathbb{P}^n \}.$$
\section{Multiprojective bundles}
In this Section, we prove Theorems \ref{A} and \ref{autmult}, which we deduce from Theorem \ref{auto}. We begin by proving the following lemmas.
\begin{lemma} \label{contr}
Let $X$ be a variety, and let $\underline{E}\,=\,(E_{1},\cdots,E_{r })$ be an $r$-tuple of vector bundles over $X$. Suppose $Y$ is a variety and we have the following commutative diagram: \begin{center}
\begin{tikzcd}
\mathbb{P}_X(\underline{E}) \arrow[r, "f" ]  \arrow[d, "p"]
& Y \arrow[ld, "q"] \\
X
\end{tikzcd},
\end{center}
where $f$ and $q$ are proper maps, and $f_{\ast}\mathcal{O}_{{\mathbb{P}}_{X}(\underline{E})} \,=\, \mathcal{O}_{Y}$. Then, up to a permutation of $E_{1},\cdots,E_{r}$, we have  $Y \,\cong\, \mathbb{P}_{X}(E_{1},\cdots, E_{s})$ over $X$, for some $0\leq s\leq r$, $f,q$ being the natural projections.
\end{lemma}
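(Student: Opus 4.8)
The plan is to recognise $f$ as one of the forgetful projections of the multiprojective bundle. Write $\mathcal P=\mathbb P_X(\underline E)$ and $n_i=\mathrm{rank}\,E_i-1$, let $\xi_i=\pi_i^\ast\mathcal O_{\mathbb P(E_i)}(1)$ be the relative hyperplane class of the $i$-th factor, and let $\ell_i$ denote the class of a line in the $i$-th projective-space factor of a fibre of $p$; for $S\subseteq\{1,\dots,r\}$ write $\pi_S\colon\mathcal P\to\mathbb P_X(E_i:i\in S)=:\mathcal P_S$ for the projection onto the factors indexed by $S$, so that $\pi_{\{i\}}=\pi_i$ and $\pi_\emptyset=p$. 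Two preliminary observations organise the proof. First, since $q\circ f=p$, the map $f$ is over $X$, and any curve $C$ it contracts satisfies $p(C)=q(f(C))=\mathrm{pt}$, hence lies in a fibre of $p$; such a fibre is the product $\prod_i\mathbb P^{n_i}$, whose relative $1$-cycles over $X$ are spanned by $\ell_1,\dots,\ell_r$. Second, $f_\ast\mathcal O_{\mathcal P}=\mathcal O_Y$ makes $f$ a proper surjection with connected fibres, and by flat base change the same holds for its restriction $f_\eta\colon\mathcal P_\eta\to Y_\eta$ over the generic point $\eta$ of $X$.

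The engine is relative Mori theory applied to the multiprojective bundle. A fibre $\prod_i\mathbb P^{n_i}$ is a smooth Fano variety whose cone of curves is the simplicial cone $\sum_i\mathbb R_{\geq0}\ell_i$, and by the Cone and Contraction Theorem its only proper surjections with connected fibres onto a variety are, up to isomorphism, the projections onto subproducts: the face $\langle\ell_j:j\notin S\rangle$ contracts to $\prod_{i\in S}\mathbb P^{n_i}$. I would first apply this to $f_\eta$ over $k(\eta)$ to single out the subset $S$ of factors that survive, so that $f_\eta\cong\pi_{S,\eta}$ and the fibres of $f_\eta$ are exactly the subproducts $\prod_{j\notin S}\mathbb P^{n_j}$. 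Next I would promote this to a global factorisation
\[
\mathcal P\;\xrightarrow{\ \pi_S\ }\;\mathcal P_S\;\xrightarrow{\ g\ }\;Y,
\]
by showing that $f$ is constant on every fibre of $\pi_S$ (the Rigidity Lemma, starting from the generic fibre which we have just contracted) and then checking that the induced $g$ contracts no curve and satisfies $g_\ast\mathcal O_{\mathcal P_S}=\mathcal O_Y$; a proper quasi-finite morphism with $g_\ast\mathcal O=\mathcal O_Y$ is an isomorphism, giving $Y\cong\mathcal P_S=\mathbb P_X(E_i:i\in S)$ over $X$ with $f$ and $q$ the natural projections, and a reindexing making $S=\{1,\dots,s\}$ finishes the proof.

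The main obstacle is passing from the generic fibre to a statement valid over all of $X$, that is, ensuring the contraction is uniform — equivalently, that no special fibre of $p$ is contracted more (or less) than the generic one, and hence that $g$ really is quasi-finite and $Y$ is projective over $X$. This cannot follow from properness alone, since a general proper surjection with connected fibres from a projective variety can have a non-projective image; what rescues the situation is the rigidity of the bundle's Picard group, $\mathrm{Pic}(\mathcal P)=p^\ast\mathrm{Pic}(X)\oplus\bigoplus_i\mathbb Z\xi_i$, which forces the intersection number $(f^\ast M)\cdot\ell_i$ of any $M\in\mathrm{Pic}(Y)$ to be independent of the chosen fibre-line $\ell_i$. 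Concretely, I expect to produce the relatively semiample line bundle $\sum_{i\in S}\xi_i$ on $\mathcal P$ — the pullback under $\pi_S$ of a relatively very ample bundle on $\mathcal P_S$ — and to identify $f$ with the relative contraction it defines; this both exhibits $Y$ as projective over $X$ and makes ``contracted'' an all-or-nothing condition on each $\ell_i$, closing the gap left by the fibrewise and rigidity arguments. Verifying that $f$ is indeed the morphism attached to this line bundle (rather than merely comparing it with $\pi_S$ on the generic fibre) is the delicate point on which the whole argument turns.
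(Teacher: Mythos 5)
Your proposal assembles the right ingredients (the decomposition $\mathrm{Pic}(\mathcal P)=p^\ast\mathrm{Pic}(X)\oplus\bigoplus_i\mathbb Z\xi_i$, a relatively ample class, factorization through $\pi_S$, and ``proper quasi-finite $+$ Stein $\Rightarrow$ isomorphism''), but as written it stops short at exactly the step you yourself call delicate, and that step is the whole content of the lemma. Rigidity applied to the fibres over the generic point of $X$ only gives factorization of $f$ through $\pi_S$ over an \emph{open} subset of $\mathcal P_S$ (the locus of contracted fibres is open by semicontinuity, and rigidity again only produces open sets), so nothing you have proved rules out special fibres being contracted differently. Your proposed rescue --- ``identify $f$ with the relative contraction defined by $\sum_{i\in S}\xi_i$'' --- is left as an expectation, and in your ordering it is genuinely awkward: to detect via intersection numbers that a special-fibre line $\ell_i$, $i\notin S$, is actually contracted, you need $(f^\ast M)\cdot\ell_i=0$ to \emph{imply} contraction, which requires $M\in\mathrm{Pic}(Y)$ to be positive on the non-contracted curves in fibres of $q$, i.e.\ a $q$-ample $M$ --- an object you never introduce (indeed you worry $Y$ might not be projective over $X$). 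So the argument is circular at the decisive point: you need to know which curves $f$ contracts to identify it with the contraction, and you need the identification to know which curves it contracts.

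The paper's proof dissolves the difficulty by running your last idea first, and it is worth seeing how short this makes everything. Take $H$ a $q$-ample line bundle on $Y$ (the paper posits this outright, so your projectivity worry is an implicit hypothesis its proof shares; it is harmless in all the paper's applications, where $Y$ is projective over $X$). Write $f^\ast H=p^\ast L\otimes\bigotimes_i p_i^\ast\mathcal O_{\mathbb P(E_i)}(b_i)$. Nefness of $f^\ast H$ on the fibres of $p$ gives $b_i\geq 0$; set $S=\{i: b_i>0\}$, say $S=\{1,\dots,s\}$ after permutation. Because the $b_i$ are the coefficients of a single global line bundle, $S$ is uniform over $X$ \emph{by construction}: there is no generic-to-special passage, so your ``main obstacle'' never arises. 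On each fibre $F$ of $\pi_S$ the bundle $f^\ast H$ is trivial, while $f(F)$ lies in a fibre of $q$ on which $H$ is ample; hence $f(F)$ is a point, for \emph{every} $F$, and $f$ factors globally as $f_1\circ\pi_S$. Finally $f_1^\ast H$ restricts to $\mathcal O(b_1,\dots,b_s)$ with all $b_i>0$ on fibres of $\mathbb P_X(E_1,\dots,E_s)\to X$, hence is relatively ample, so $f_1$ contracts no curve and, being proper, is finite; together with $f_{1\ast}\mathcal O=\mathcal O_Y$ (which follows from $f_\ast\mathcal O_{\mathcal P}=\mathcal O_Y$ and $\pi_{S\ast}\mathcal O_{\mathcal P}=\mathcal O_{\mathcal P_S}$) this makes $f_1$ an isomorphism. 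In particular the preliminary Mori-theoretic classification of contractions of the generic fibre over $k(\eta)$ --- itself slightly shaky, since the cone/contraction theorem presupposes a projective target --- is not needed at all.
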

\begin{proof}
Let $H$ be a $q$-ample line bundle on $Y$. Let $p_i\,:\,\mathbb{P}_{X}(\underline{E})\to \mathbb{P}_{X}(E_i)$ denote the projection maps. Then $f^{\ast}H \,\in\, $ Pic($\mathbb{P}_{X}(\underline{E})) \,=\,$ Pic$(X) \bigoplus (\oplus_{i\,=\,1}^{r}\mathbb{Z})$. Let $0\leq s\leq r$ be an integer. Then we can write:
\begin{equation*}
 f^\ast H \,=\, p^{\ast} L \otimes \left(\bigotimes_i p_i^*\mathcal{O}_{\mathbb{P}(E_{i})}(b_{i})\right),
\end{equation*} 
for a line bundle $L\in \textnormal{Pic}(X)$ and integers $b_{i}$. Since $f^{\ast}H$ is nef on each of the fiber of $p$, it follows that $b_{i}\,\geq\,0$ for all $i$. Up to a permutation of the $E_i$'s, we may assume $b_{1},\cdots,b_{s}\, >\, 0$, and $b_{s+1} \,=\, \cdots \,=\, b_{r} \,=\, 0.$ Hence, the morphism $f$ factors through $\mathbb{P}(E_{1},\cdots,E_{s}),$ as shown in the following diagram:  
\begin{center}
\begin{tikzcd}
 \mathbb{P}(\underline{E}) \arrow[rd,swap]{ur}[swap]{f} \arrow[d, ""] 
    \\ \mathbb{P}(E_{1},\cdots,E_{s}) \arrow[r,"f_{1}"] \arrow[d, "\pi" ]& Y  \arrow[ld, " "]  \\
    X
\end{tikzcd}
\end{center}
Let $\pi_i\,:\,\mathbb{P}_{X}(E_1, E_2,...,E_s)\to \mathbb{P}_{X}(E_i)$ be the projections for $1\leq i\leq s.$ Then
\begin{equation*}
f_{1}^{\ast}H \,=\, \pi^{\ast}L\otimes \left(\bigotimes_{i \,=\, 1}^{s}\pi_i^*\mathcal{O}_{\mathbb{P}(E_{i})}(b_{i})\right), 
\end{equation*}
which is $\pi$-ample. Therefore, $f_{1}$ is finite. Moreover, we are given that $f_{1}$$_{\ast}\mathcal{O}_{\mathbb{P}(E_{1},\cdots,E_{s})}=\mathcal{O}_Y.$ Hence, $f_{1}$ is an isomorphism.
\end{proof}
\begin{lemma}\label{setsum}
Let $A_1,A_2\subset\mathbb{C}$ be sets such that $|A_1|\,=\,|A_2|\,=\,2$, and let $l_i$ be the line in $\mathbb{C}$ that contains $A_i.$ Suppose that $A_1 +A_2$ lies on a circle $C$ in the complex plane. Let $\Bar{A_i}$ be the average of the elements of $A_i$. Then, the following holds:
\begin{enumerate}
\item[$(1)$] $l_1, l_2$ are not parallel.
\item[$(2)$] $\Bar{A_1}+ \Bar{A_2}$ is the centre of $C.$
\end{enumerate}
\end{lemma}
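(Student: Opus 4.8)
The plan is to read the four-element Minkowski sum as the vertex set of a parallelogram and then exploit the rigidity of a parallelogram inscribed in a circle. Write $A_1 = \{a_1, a_1'\}$ and $A_2 = \{a_2, a_2'\}$ with $a_1 \neq a_1'$ and $a_2 \neq a_2'$, so that
$$ A_1 + A_2 \,=\, \{\, a_1+a_2,\ a_1+a_2',\ a_1'+a_2,\ a_1'+a_2'\,\}. $$
Computing the edge vectors shows these four points are the vertices of a (possibly degenerate) parallelogram $\Pi$, traversed in the cyclic order $a_1+a_2,\ a_1+a_2',\ a_1'+a_2',\ a_1'+a_2$, whose two pairs of parallel sides point in the directions $a_2'-a_2$ and $a_1'-a_1$, that is, along $l_2$ and $l_1$ respectively. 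Thus $l_1 \parallel l_2$ is exactly the condition that $\Pi$ degenerate, with all four vertices collinear.

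For part $(1)$ I would argue by contradiction: suppose $l_1 \parallel l_2$, so $a_2' - a_2 = \lambda (a_1' - a_1)$ for some real $\lambda \neq 0$. Then all four sum-points lie on a single line $\ell$ parallel to $l_1$, and parametrising them along $\ell$ (with $a_1+a_2$ as origin and $a_1'-a_1$ as unit) yields the four real coordinates $0,\,1,\,\lambda,\,1+\lambda$. A short case check on $\lambda$ shows that at least three of these coordinates are distinct (collisions can only occur at $\lambda=1$ or $\lambda=-1$, each still leaving three distinct values). Hence $\ell$ would contain at least three distinct points of $A_1+A_2 \subset C$, contradicting that a line meets a circle in at most two points. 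Therefore $l_1$ and $l_2$ are not parallel and $\Pi$ is a genuine, non-degenerate parallelogram.

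For part $(2)$ I now invoke that $\Pi$ is a non-degenerate parallelogram whose vertices lie on the circle $C$. A classical fact of Euclidean geometry gives that a parallelogram inscribed in a circle is a rectangle (opposite angles of a cyclic quadrilateral are supplementary, while opposite angles of a parallelogram are equal, forcing every angle to be right). The circumscribed circle of a rectangle is centred at the intersection of its diagonals, equivalently at the average of its four vertices, and this average is
$$ \frac{(a_1+a_2)+(a_1+a_2')+(a_1'+a_2)+(a_1'+a_2')}{4} \,=\, \frac{a_1+a_1'}{2} + \frac{a_2+a_2'}{2} \,=\, \Bar{A_1} + \Bar{A_2}, $$
which is therefore the centre of $C$, as required; the rectangle structure moreover yields $l_1 \perp l_2$, strengthening $(1)$.

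The only genuinely delicate point is the degenerate case in part $(1)$: one must verify that collinearity of the four sums really forces three distinct points rather than collapsing to two, which is precisely where the hypotheses $|A_1|=|A_2|=2$ enter. Once this non-degeneracy is secured, part $(2)$ follows immediately from the inscribed-parallelogram fact, and the centre computation is a one-line symmetric-sum identity.
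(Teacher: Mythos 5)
Your proposal is correct and follows essentially the same argument as the paper: part $(1)$ by noting that parallel lines would force at least three collinear points of $A_1+A_2$ on the circle $C$, and part $(2)$ by observing that the four sums form a parallelogram inscribed in $C$, hence a rectangle, whose centre is $\Bar{A_1}+\Bar{A_2}$. Your write-up merely fills in details the paper leaves implicit (the case check on $\lambda$ guaranteeing three distinct points, and the explicit vertex-average computation), which is fine.
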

\begin{proof}
{(1):} If $l_1, l_2$ were parallel, then $A_1 +A_2$ would be contained in the line $l_1+l_2$ and would contain at least $3$ points. This contradicts the fact that $A_1 +A_2$ lies on the circle $C$.

\noindent{(2):} The points in $A_1 +A_2$ forms the $4$ vertices of a parallelogram $Q$. Since $A_1 +A_2\in C$, $Q$ must be a cyclic parallelogram, and thus a rectangle. Therefore, the centre of the circle $C$ is the centre of the parallelogram $Q$, which is $\Bar{A_1}+ \Bar{A_2}$.
\end{proof}
\begin{lemma}\label{alg}
Let $n\,\geq\, 2$ be an integer, and let $A$ \,=\,  $ \begin{pmatrix}
    a & b_{1} & b_{2} \\
    c_{1} & d_{11} & d_{12}\\
    c_{2} & d_{21} & d_{22}
\end{pmatrix}$ 
$\,\in\, \textnormal{GL}_{3}(\mathbb{C})$, $g_{1}\,\in\, \mathbb{C}[U_{1}], g_{2}\,\in\, \mathbb{C}[U_{2}]$ are monic quadratic polynomials, $G_{1} \,\in\, \mathbb{C}[U_{1}, T], G_{2}\,\in\, \mathbb{C}[U_{2}, T]$ be their homogenizations. Let $t, u_{1}, u_{2}$ be the images of $T, U_{1}, U_{2}$ in $ R\,=\,\frac{\mathbb{C}[T, U_{1}, U_{2}]}{(T^{n+1}, G_{1}(U_{1}, T), G_{2}(U_{2}, T))}.$ Define $x, y_1, y_2\,\in\, R$ by \begin{gather}
\begin{pmatrix}
    x \\
    y_{1}\\
    y_{2}
\end{pmatrix}
\,=\,
A
\begin{pmatrix}
    t \\
    u_{1}\\
    u_{2}
\end{pmatrix}.
\end{gather} Suppose there exist monic quadratic polynomials $f_{i}\,\in\, \mathbb{C}[Y_{i}]$, for $i \,=\, 1, 2$ such that if $F_{i}\,\in\, \mathbb{C}[Y_{i}, X]$ are their homogenizations then $x^{n+1} \,=\, 0$, $F_{1}(y_{1}, x) \,=\, 0,$ and $ F_{2}(y_{2}, x) \,=\, 0. $ Let $ \alpha_{1},\alpha_{2},\beta_{1},\beta_{2}$ be the averages of the roots of $g_1, g_2, f_1, f_2$ respectively. Assume that 
\begin{equation*}
a+b_1\alpha_1+b_2\alpha_2\,=\,0.
\end{equation*}
Then, either $b_1$ or $b_2$ is $0.$
\end{lemma}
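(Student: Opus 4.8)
The plan is to linearise the finite $\mathbb{C}$-algebra $R$ along its branches. Suppose first that $g_1$ and $g_2$ have distinct roots, $A_1=\{\rho_1,\rho_1'\}$ and $A_2=\{\rho_2,\rho_2'\}$. For each pair $(\rho,\rho')\in A_1\times A_2$ there is a $\mathbb{C}$-algebra homomorphism $R\to\mathbb{C}[t]/(t^{n+1})$ sending $t\mapsto t$, $u_1\mapsto\rho t$, $u_2\mapsto\rho' t$ (these are the points of the normalisation of the reduced ring $\mathbb{C}[[t]][u_1,u_2]/(G_1,G_2)$, cut by $t^{n+1}$). Under it $x\mapsto\lambda t$, $y_1\mapsto\eta_1 t$, $y_2\mapsto\eta_2 t$, where $\lambda=a+b_1\rho+b_2\rho'$ and $\eta_i=c_i+d_{i1}\rho+d_{i2}\rho'$ are affine functions of the chosen roots. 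Thus the four values of $\lambda$ form the set $a+(b_1A_1+b_2A_2)$.

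I would first read off the circle. Lifting to $\widehat R=\mathbb{C}[[t]][u_1,u_2]/(G_1,G_2)$, the relation $x^{n+1}=0$ is equivalent to the constant tuple $(\lambda^{n+1})_{(\rho,\rho')}$ lying in $\widehat R$; expanding in the $\mathbb{C}[[t]]$-basis $\{1,u_1,u_2,u_1u_2\}$ shows a tuple of constants lies in $\widehat R$ only if its entries coincide. Hence all four numbers $\lambda^{n+1}$ agree, so the four values of $\lambda$ lie on a circle centred at $0$; equivalently $b_1A_1+b_2A_2$ lies on a circle. If $b_1,b_2\ne0$ this is a genuine sum of two two-point sets, and Lemma \ref{setsum} applies: the sum is a nondegenerate parallelogram whose centre is $b_1\alpha_1+b_2\alpha_2$, and since the circle is centred at $0$ this recovers $a+b_1\alpha_1+b_2\alpha_2=0$, the standing hypothesis. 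If instead some $g_k$ has a double root I would argue directly in the resulting non-reduced branch, where $x^{n+1}=0$ becomes $(n+1)\lambda^{n}b_k\,t^{n}v_k=0$ with $v_k$ nilpotent; this forces $\lambda\equiv0$ along the branch and hence $b_{3-k}=0$. This degenerate situation is the one place where $x^{n+1}=0$ is used in an essential way.

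Assume now for contradiction that $b_1\ne0$ and $b_2\ne0$; then $g_1,g_2$ are separable by the previous paragraph, and the four index points $(\rho,\rho')$ are the vertices of a nondegenerate $2\times2$ grid, so no three of them are collinear. The relations $F_i(y_i,x)=0$ now enter: on each branch they read $t^2F_i(\eta_i,\lambda)=0$, and since $n\ge2$ we cancel $t^2$ to get $F_i(\eta_i,\lambda)=0$ at every grid point. Writing $\tau_i,\tau_i'$ for the roots of $f_i$, we have $F_i(\eta_i,\lambda)=(\eta_i-\tau_i\lambda)(\eta_i-\tau_i'\lambda)$, a product of two linear forms in $(\rho,\rho')$; hence the four grid points are covered by the two lines $\{\eta_i-\tau_i\lambda=0\}$ and $\{\eta_i-\tau_i'\lambda=0\}$. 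Since no three grid points are collinear (so in particular $f_i$ has distinct roots, else all four would lie on one line), each line carries exactly two of them, and there are only two such splittings: either the two lines meet, necessarily at the grid centre $(\alpha_1,\alpha_2)$, forcing $\lambda(\alpha_1,\alpha_2)=0=\eta_i(\alpha_1,\alpha_2)$; or the two lines are parallel to a grid axis, which equates two coordinate coefficients and forces $b_1=0$ or $b_2=0$.

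To conclude I would play the two indices against each other. If $F_1$ or $F_2$ produces the parallel splitting we already contradict $b_1b_2\ne0$. Otherwise both produce the meeting (diagonal) splitting, so $\lambda$, $\eta_1$ and $\eta_2$ all vanish at $(\alpha_1,\alpha_2)$, i.e. the nonzero vector $(1,\alpha_1,\alpha_2)$ lies in the kernel of $A$, contradicting $A\in\mathrm{GL}_3(\mathbb{C})$. Either way $b_1=0$ or $b_2=0$, as claimed. The step I expect to be most delicate is the branch bookkeeping — showing that the stated polynomial identities in $(\rho,\rho')$ are exactly equivalent to the relations in $R$ — together with the non-reduced degenerate cases, where the clean normalisation picture fails and $x^{n+1}=0$ must be handled by hand as above.
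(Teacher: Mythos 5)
Your separable case is correct, and it is a genuinely different route from the paper's: the paper uses the hypothesis $a+b_1\alpha_1+b_2\alpha_2=0$ to normalize $a=0$ and make all four quadratics even, then compares the coefficients of $U_1U_2$, $U_1T$, $U_2T$ in the degree-two membership $F_i(y_i,x)\in(G_1,G_2)$ --- an argument that never distinguishes simple from multiple roots. You instead evaluate on branches and reduce to covering the $2\times 2$ grid $A_1\times A_2$ by the two lines $\{\eta_i-\tau_i\lambda=0\}$, $\{\eta_i-\tau_i'\lambda=0\}$; the three matchings (rows, columns, diagonals) give respectively $b_2=0$, $b_1=0$, or the kernel vector $(1,\alpha_1,\alpha_2)$ of $A$, and the final contradiction with $A\in\mathrm{GL}_3(\mathbb{C})$ is sound.

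The genuine gap is the degenerate case in which \emph{both} $g_1$ and $g_2$ have double roots. Your non-reduced-branch argument only works when exactly one $g_k$ is a square: then $\lambda\equiv0$ holds on two branches (one for each root of the separable $g_{3-k}$), and subtracting the two linear relations gives $b_{3-k}=0$. If both $g_i=(U_i-\rho_i)^2$, there is a single non-reduced branch, and the relation $x^{n+1}=0$ forces only the single equation $\lambda=a+b_1\rho_1+b_2\rho_2=0$, which is exactly the standing hypothesis (here $\alpha_i=\rho_i$), so no contradiction arises; moreover the grid collapses to one point, so the line-covering argument is vacuous. In this case you must invoke the relations $F_i(y_i,x)=0$ in the non-reduced ring: after translating $\rho_i$ to $0$ (so $a=0$), expanding $F_i(y_i,x)$ in the basis $t^2,tu_1,tu_2,u_1u_2$ of the degree-two part of $R$ shows the $t^2$-coefficient is $c_i^2$, hence $c_1=c_2=0$, and the first column $(a,c_1,c_2)^{t}$ of $A$ vanishes, contradicting $A\in\mathrm{GL}_3(\mathbb{C})$; this is precisely the computation the paper performs uniformly. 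A second, smaller omission: the zero set of $\eta_i-\tau\lambda$ is a line only when this affine form is not identically zero. If it vanishes identically --- which is the other way $f_i$ can have a double root without producing three collinear grid points, so your parenthetical ``else all four would lie on one line'' is not quite justified --- then $(c_i,d_{i1},d_{i2})=\tau(a,b_1,b_2)$, two rows of $A$ are proportional, and invertibility is again contradicted; this takes one sentence but must be said.
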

\begin{proof}
Suppose $b_1b_2\neq0.$
By replacing $(T, U_{1},U_{2})$ with $ (T,U_{1}-\alpha_{1}T,U_{2}-\alpha_{2}T),$ and $ (X,Y_{1}, Y_{2})$ with $ (X,Y_{1}-\beta_{1}X ,Y_{2}-\beta_{2}X),$ we can assume that $a\,=\,0$, and
$$ g_{1}(U_{1})\,=\, U_{1}^{2}-a_{1}, \,\,\,\, g_{2}(U_{2})\,=\, U_{2}^{2}- a_{2},\,\,\,\, f_{1}(Y_{1})\,=\, Y_{1}^{2}- a_{3},\,\,\,\, f_{2}(Y _{2})\,=\, Y_{2}^2 - a_{4},$$
for some $ a_{1}, a_{2}, a_{3}, a_{4} \,\in\, \mathbb{C}$. This replacement does not change $b_{1},b_{2}.$ 

Since $F_{i}(y_{i},x)\,=\,0$ and $n\,\geq\, 2$, we see that  
\begin{equation*}
F_{i}(d_{i1}U_{1}+d_{i2}U_{2}+c_{i}T, b_{1}U_{1}+ b_{2}U_{2})
\end{equation*}
is a $\mathbb{C}$-linear combination of $G_{1}(U_{1},T)$ and $ G_{2}(U_{2}, T).$ Since the coefficients of $ U_{1}U_{2}, U_{1}T$ and $ U_{2}T$ are zero in $ G_{1}(U_{1},T)$ and $ G_{2}(U_{2},T),$ the coefficient of $ U_{1}U_{2}, U_{1}T$ and $U_{2}T$ are zero in 
 $$ F_{i}(d_{i1}U_{1}+d_{i2}U_{2}+c_{i}T, b_{1}U_{1}+ b_{2}U_{2}) \,=\, (d_{i1}U_{1}+ d_{i2}U_{2} + c_{i}T)^{2}- a_{2+i}(b_{1}U_{1}+b_{2}U_{2})^{2}.$$
This implies: 
\begin{equation*}
d_{i1} c_{i}\,=\, d_{i2}c_{i}\,=\,0, \quad
 d_{i1}d_{i2}\,=\,a_{2+i}b_{1}b_{2}.
\end{equation*}
If, for some $i$, we have $c_{i} \neq 0,$ then $d_{i1}\,=\,d_{i2}\,=\,0 $, and hence $a_{2+i}\,=\,0$ since $b_1b_2\neq 0.$ In that case, 
\begin{equation*}
F_{i}(c_{i}T, b_{1}U_{1}+b_{2}U_{2}) \,=\, c_{i}^{2}T^{2}.
\end{equation*}
It is a $\mathbb{C}$-linear combination of $G_{1}(U_{1},T)\,=\,U_{1}^{2}-a_{1}T^2$ and $G_{2}(U_{2},T)\,=\,U_{2}^{2}-a_{2}T^2.  $ But this is not possible unless $c_{i}\,=\,0,$ leading to a contradiction. Therefore, $c_{i}\,=\,0$ for $i \,=\, 1,2.$ This is also impossible as $A\,\in\, \textnormal{GL}_{3}(\mathbb{C})$ would then have one column zero. This contradiction proves the result.      
\end{proof}
The following lemma is crucial in the proof of Theorem \ref{auto}.
\begin{lemma}\label{factor}
Let $n,r,$ be positive integers, and let $\underline{E}\,=\,(E_{1},\cdots,E_{r })$, $\underline{F}\,=\,(F_{1},\cdots,F_{r })$ be $r$-tuples of vector bundles of rank $\,\geq\, 2$ over $\mathbb{P}^n$. Suppose $ \phi\,:\, \mathbb{P}(\underline{E})\to \mathbb{P}(\underline{F})$ is an isomorphism. Let $\mathbb{P}(\underline{E})\,\xrightarrow{\,\,\,f\,\,\,}\, \mathbb{P}^n$, $\mathbb{P}(\underline{F})\,\xrightarrow{\,\,\,h_i\,\,\,}\, \mathbb{P}(F_i)$ be the projections. Then there is an index $i$ such that the composition $f\circ \phi^{-1}$ factors through $h_i$.
\end{lemma}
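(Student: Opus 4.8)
The plan is to reduce the geometric statement to a numerical one about a single line bundle, and then to the explicit algebra of Lemma \ref{alg}. Write $g:=f\circ\phi^{-1}\colon\mathbb P(\underline F)\to\mathbb P^n$ and $\mathcal L:=g^{\ast}\mathcal O_{\mathbb P^n}(1)$, and let $\pi\colon\mathbb P(\underline F)\to\mathbb P^n$ be the structure map. Since the fibres of $f$ are connected and rational, $f_{\ast}\mathcal O=\mathcal O_{\mathbb P^n}$, and as $\phi$ is an isomorphism the same holds for $g$. In $\mathrm{Pic}(\mathbb P(\underline F))=\mathrm{Pic}(\mathbb P^n)\oplus\bigoplus_i\mathbb Z$ write $\mathcal L=\pi^{\ast}\mathcal O(a)\otimes\bigotimes_i h_i^{\ast}\mathcal O_{\mathbb P(F_i)}(b_i)$; since $\mathcal L$ is nef, testing against a line in the $i$-th fibre factor gives $b_i\ge 0$. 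The first observation is that $g$ factors through $h_i$ as soon as $b_j=0$ for every $j\neq i$: then $\mathcal L=h_i^{\ast}\mathcal M$, and because $h_{i\ast}\mathcal O=\mathcal O$ the projection formula gives $H^0(\mathbb P(\underline F),\mathcal L)=H^0(\mathbb P(F_i),\mathcal M)$, so the linear forms on $\mathbb P^n$ pulled back by $g$ all descend along $h_i$. Hence it suffices to prove that \emph{at most one} $b_i$ is nonzero.

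To this end I would argue by contradiction: suppose two of them, say $b_1,b_2$, are positive, and pass to Chow rings. The isomorphism $\phi$ induces a graded ring isomorphism $(\phi^{-1})^{\ast}\colon A^{\ast}(\mathbb P(\underline E))\to A^{\ast}(\mathbb P(\underline F))$; expressing the images of the degree-one generators $H_E,\xi_1^E,\dots$ in the basis $H_F,\xi_1^F,\dots$ records an invertible matrix $A$, and the invertibility is exactly $A\in\mathrm{GL}$. Because $\phi^{\ast}\mathcal L=(g\circ\phi)^{\ast}\mathcal O(1)=f^{\ast}\mathcal O(1)=H_E$, we have $\mathcal L=(\phi^{-1})^{\ast}H_E$, so the relation $H_E^{n+1}=0$ transports to $\mathcal L^{n+1}=0$, while the images of the $\xi^E$ inherit the Grothendieck relations of the $E_i$. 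These are precisely the relations placed on $x=\mathcal L$ and on $y_1,y_2$ inside the ring $R$ of Lemma \ref{alg}, whose defining relations $G_1,G_2$ are the Grothendieck relations of $F_1,F_2$.

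The remaining work is to put oneself in the two-factor, rank-two situation of Lemma \ref{alg} and to verify its hypotheses. For the number of factors I would fix the offending pair $(1,2)$ and discard the rest (it is enough to exclude two simultaneously positive $b_i$ for each pair); for the rank I would reduce to the rank-two case so that each Grothendieck relation becomes a monic quadratic and $R$ takes the form $\mathbb C[T,U_1,U_2]/(T^{n+1},G_1,G_2)$. The one genuinely arithmetic hypothesis of Lemma \ref{alg} is the normalization $a+b_1\alpha_1+b_2\alpha_2=0$, which says that after recentring the relative classes to be trace-free the base component of $\mathcal L$ vanishes; I would obtain it from nefness of $\mathcal L$ together with $\mathcal L^{n+1}=0$ and the fact that $g$ surjects onto the full $n$-dimensional base $\mathbb P^n$ (equivalently $\mathcal L^{n}\neq 0$). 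Granting this, Lemma \ref{alg} forces $b_1=0$ or $b_2=0$, contradicting $b_1,b_2>0$ and finishing the proof.

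The hard part, I expect, is exactly this last reduction: honestly producing a rank-two, two-factor model in which the hypotheses of Lemma \ref{alg} hold, and in particular extracting the normalization $a+b_1\alpha_1+b_2\alpha_2=0$ from the geometry — it encodes that $g$ maps onto all of $\mathbb P^n$ rather than a proper subvariety, and this is the point where nefness and the invertibility of $A$ must both be used. By contrast, the positivity $b_i\ge 0$, the descent of sections, and the bookkeeping of Picard and Chow groups are routine.
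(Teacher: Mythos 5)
Your skeleton---nefness gives $b_i\ge 0$; factorization once at most one $b_i$ is nonzero; Chow rings and an invertible change-of-basis matrix; final contradiction via Lemma~\ref{alg}---is the same frame the paper uses, but every step you defer as ``the remaining work'' is where the actual proof lives, and the mechanisms you propose for those steps do not work. First, you cannot \emph{reduce} to the rank-two, two-factor situation: in the paper these are theorems (Claims 1 and 2 inside the proof), not reductions. One writes $(aT+\sum_i b_iU_i)^{n+1}=\sum_i Q_ig_i(U_i,T)+\alpha T^{n+1}$ in the Chow presentation and splits on $\alpha$. When $\alpha\neq 0$, substituting roots $z_i$ of the $g_i(U_i,1)$ shows the sumset $a+\sum_i b_iZ_i$ (with $Z_i$ the root sets) lies on the circle $\{\omega^{n+1}=\alpha\}$; since two distinct circles meet in at most two points this forces $|Z_i|=p_i+1\le 2$, i.e.\ every $p_i=1$, and since each root-line is real or purely imaginary and no two can be parallel (Lemma~\ref{setsum}(1)), it forces $r=2$. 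Your alternative of ``discarding the rest'' fails concretely: specializing $U_j\mapsto z_j$ for $j\ge 3$ leaves you with a $3\times 3$ corner of an invertible $(r+1)\times(r+1)$ matrix, which need not be invertible, while Lemma~\ref{alg} requires $A\in\mathrm{GL}_3$; and nothing guarantees the chosen $y_1,y_2$ satisfy \emph{quadratic} relations without Claim 1. Second, your proposed source of the normalization $a+b_1\alpha_1+b_2\alpha_2=0$ (nefness plus surjectivity of $g$) cannot be right: $g=f\circ\phi^{-1}$ is surjective automatically, so surjectivity carries no information. In the paper the normalization is again circle geometry: the four points of $a+b_1Z_1+b_2Z_2$ are a parallelogram inscribed in the circle $\{\omega^{n+1}=\alpha\}$, hence a rectangle, whose center $a+b_1\bar Z_1+b_2\bar Z_2$ must be the circle's center $0$ (Lemma~\ref{setsum}(2)). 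This is exactly where $\alpha\neq0$ is essential.

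Third, your proposal has nothing for the case $\alpha=0$, where the entire circle mechanism collapses and Lemma~\ref{alg} is never invoked. There the paper first normalizes twists (Assumption~($\ast$), which you also omit) so that $g_i(U_i,1)=U_i^{p_i+1}$ for all $i$, deduces $a=0$ and $\sum_i p_i\le n$, and then shows each $y_i$ has vanishing $t$-coefficient, so the change-of-basis matrix has a zero column, contradicting invertibility---a completely different argument. Finally, Lemma~\ref{alg} assumes $n\ge 2$, so $n=1$ needs the separate observation that bundles on $\mathbb{P}^1$ split, forcing real roots and contradicting the root-type dichotomy of Claim 2. So: correct outline and correct target lemma, but the rank/factor claims, the normalization, the $\alpha=0$ case, and $n=1$ are all genuine gaps, and the first two are misdiagnosed rather than merely unfinished.
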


\begin{proof}
Let $p_1,p_2,\cdots,p_r$ and $q_1, q_2,\cdots,q_r$ be positive integers such that rank $E_i\,=\,p_i+1$ and rank $F_i\,=\,q_i+1$ for all $i$. Set $ X\,=\, \mathbb{P}(\underline{F})$, and let $h\,:\,X\to \mathbb{P}^n, f_i\,:\,\mathbb{P}(\underline{E})\to \mathbb{P}(E_i)$ be the projections. Let $A^{\ast}(X)$ denote the Chow ring of $X$, and identify Pic$(X)=A^1(X)$. For a vector bundle $G$ over $X$, let $c(E)\in A^{\ast}(X)$ denote the total Chern class of $E$ (see \cite{fulton2013intersection} for definitions and results about the Chow ring and Chern classes). 
Let 
\begin{equation*}
t\,=\, h^*\mathcal{O}_{\mathbb{P}^n}(1),\quad x \,=\,  (f\circ \phi^{-1})^*\mathcal{O}_{\mathbb{P}^n}(1),\quad u_{i}\,=\,h_i^*\mathcal{O}_{\mathbb{P}(F_{i})}(1),\quad y_i\,=\,(f_i\circ \phi^{-1})^*\mathcal{O}_{\mathbb{P}(E_{i})}(1)
\end{equation*}
be elements of Pic$(X) \,=\, A^{1}(X).$ 
As in the proof of {\cite[Theorem 4.4]{BSV1}}, we have the following:
\\
\noindent$(1).$ There exist homogeneous polynomials $ g_{i}(U_i,T)\,\in\, \mathbb{Z}[U_{i},T]$ of degree $ q_{i}+1$, with the coefficients of $U^{q_{i}+1}$ equal to 1, such that $g_i(1,-t)\,=\,c(h_i^*F_i).$ The ring $A^{\ast}(X)$ is generated by $t,u_1,\cdots,u_r$ with only relations $t^{n+1}\,=\,g_i(u_i, t)\,=\,0$ for all $i.$\\\\
$(2).$ There exist homogeneous polynomials $ F_{i}(Y_i,X)\,\in\, \mathbb{Z}[Y_{i},X]$ of degree $ p_{i}+1$, with the coefficient of  $Y_i^{p_{i}+1}$ equal to 1, such that $f_i(1,-x)\,=\,c((f\circ\phi^{-1})^*E_i).$ The ring $A^{\ast}(X)$ is also generated by $x,y_1,\cdots,y_r$ with the only relations $x^{n+1}\,=\,F_i(y_i, x)\,=\,0$ for all $i.$

Twisting $F_i$'s by line bundles, we can also make the following assumption:

\paragraph{\textbf{Assumption ($\ast$)}} If for some $i$, we have $g_i(U_i, T)\,=\,(U_i-dT)^{q_i+1}$ for some integer $d$, then $d\,=\,0.$

There exist unique integers $a,b_{1},b_{2},\cdots,b_{r}$ such that $$ x\,=\, at+ \sum_{i\,=\,1}^{r} b_{i}u_{i}.$$ 

We assume without loss of generality that $b_{1},b_{2},\dots ,b_{l} \neq 0$,  and $b_{l+1}\,=\, \cdots \,=\,b_{r}\,=\,0,$ for some integer $0\,\leq\, l\,\leq\, r$. Thus, we have a commutative diagram 
\begin{center}
\begin{tikzcd}
 \mathbb{P}(\underline{E}) \arrow[r,swap]{ur}{\sim}[swap]{\phi} \arrow[dd, "f"] 
& \mathbb{P}(\underline{F}) \arrow[d, ""]    \\  & \mathbb{P}(F_{1},\cdots,F_{l}) \arrow[d,""] \arrow[ld, "\tau"]  \\
\mathbb{P}^n  & \mathbb{P}^{n} ,
\end{tikzcd}
\end{center}

Using Lemma \ref{contr} (with $Y\,=\,\mathbb{P}(F_{1},\cdots, F_{l})$), we may assume without loss of generality that we have a commutative diagram: 

\begin{center}
\begin{tikzcd}
 \mathbb{P}(E_{1},E_{2},\cdots, E_{l}) \arrow[r,swap]{ur}{\sim}[swap]{\phi} \arrow[d, ""] 
& \mathbb{P}(F_{1},\cdots,F_{l}) \arrow[d, ""]  \arrow[ld, "\tau"]  \\
\mathbb{P}^n  & \mathbb{P}^{n} ,
\end{tikzcd}
\end{center}
where $\tau$ does not factor through the projection 
\begin{equation*}
\mathbb{P}(F_{1},\cdots,F_{l})\to \mathbb{P}(F_{1},..,,F_{i-1},F_{i+1},\cdots,F_{l})
\end{equation*}
for any $1\,\leq\, i\, \leq \,l.$
We aim to show that $l\,\leq\, 1$. The above diagram shows that we may assume without loss of generality that $r\,=\,l$. So, assume that $b_{1},b_{2},\cdots ,b_{r}$ are all non zero. 
We have 
\begin{equation*}
A^{\ast}(\mathbb{P}_{\mathbb{P}^{n}}(\underline{E}))\,\cong\, A^{\ast}( \mathbb{P}^{n}\times\prod_{i\,=\,1}^{r}\mathbb{P}^{p_{i}}), \quad\textrm{and}\quad A^{\ast}(\mathbb{P}_{\mathbb{P}^{n}}(\underline{F}))\,\cong\, A^{\ast}(\mathbb{P}^{n}\times\prod_{i\,=\,1}^{s}\mathbb{P}^{q_{i}} ) 
 \end{equation*}
 as graded groups. By {\cite[Lemma 4.1]{BSV1}}, we see that the $p_i$'s are a permutation of $q_i$'s. Without loss of generality, assume $p_i\,=\,q_i$ for all $i$. 

Suppose $r\,\geq \,2$. We aim for a contradiction. Write 
$$ (aT+\sum_{i}b_{i}U_{i})^{n+1} \,=\, \sum_{i}Q(\underline{U},T)g_{i}(U_{i},T) + R(\underline{U},T),$$ where each
$Q_{i},g_{i},R$ is homogeneous of degree $n+1$, and $ \textnormal{deg}_{U_{i}} R\, \leq\, p_{i}$ for all $i.$
Since the set
\begin{equation*}
\{t^{\alpha }{u_{1}^{\alpha_{1}}\cdots}u_{r}^{\alpha_{r}}\,|\,  0\,\leq\, \alpha\,\leq\, n,\, \textnormal{and} \,\ \medspace 0\,\leq\,\alpha_{i}\,\leq\, q_{i}\} 
\end{equation*}
is linearly independent in $A^*(X)$, it follows that $ R(\underline{U},T)\,=\, \alpha T^{n+1}$ for some $ \gamma \,\in\, \mathbb{Z}$. For each $1\,\leq\, i\, \leq\, r,$ let
\begin{equation*}
Z_i\,=\,\{z\,\in\, \mathbb{C}\,|\, g_i(z,1)\,=\,0\},  \, \text{and}\, \, \Bar{Z_i}\,\,  \text{the average of the elements of } Z_i.
\end{equation*}
\underline{Case 1}: $ \gamma \,=\,0.$

We have 
$$\left(\sum_{i}b_{i}U_{i}+a\right)^{n+1}\,=\, \sum_{i}Q_{i}(\underline{U},1)g_{i}(U_{i},1) . $$

Let $z_i\,\in\, Z_i$ for $2\,\leq\, i\, \leq\, r.$ We then get 
\begin{equation*}
g_{1}(U_{1},1) \,\big{\vert}\, \left(b_{1}U_{1}+\sum_{i\,>\,1}b_{i}z_{i}+a\right)^{n+1},
\end{equation*}
so $ g_{1}(U_{1},1)\,=\, (U_{1}+ \lambda)^{p_{1}+1}$ for some $ \lambda \,\in\, \mathbb{Z}.$ Hence by assumption$(\ast)$,  we have 
\begin{equation*}
g_{1}(U_{1},1)\,=\,U_{1}^{p_{1}+1}.
\end{equation*}

Similarly, $ g_{i}(U_{i},1)\,=\, U_{i}^{p_{i}+1}$ for all $i$. Thus,  

$$ \left(\sum_{i}b_{i}U_{i}+a\right)^{n+1}\,=\, \sum_{i}Q_{i}(\underline{U},1) U_{i}^{p_{i}+1}.$$ Putting $ U_{i}\,=\,0$ for all $i$, we obtain $a\,=\,0$. So,  
$$ \left(\sum_{i}b_{i}U_{i}\right)^{n+1} \,\in\, (U_{1}^{p_{1}+1},\cdots, U_{r}^{p_{r}+1}).$$ As all $b_i$'s are nonzero, we conclude that $\sum_{i}p_{i}\, \leq\, n. $

Now write $ y_{i}\,=\, c_{i}t + \sum _{j} d_{i,j}u_{j} $, where $c_i, d_{i,j}\,\in\, \mathbb{Z}.$ Then, $ F_{i} (c_{i}T+ \sum_{j}d_{i,j}U_{j}, \sum_{i}b_{i}U_{i}) \,\in\, (T^{n+1} , U_{1}^{p_{1}+1}, \dots ,U_{r}^{p_{r}+1})$ for all $i$.
Since $ r\,\geq\, 2, $ we have $ p_{i}\,<\, \sum_{j}p_{j}\, \leq\, n.$ So, coefficient of $ T^{p_{i}+1} $ in $ F_{i}(c_{i}T+\sum_{j}d_{i,j}U_{j}, \sum_{i}b_{i}U_{i}) $  must be zero. Since $F_{i}(Y_{i},X)$ is homogeneous of degree $ p_{i}+1$ and coefficient of $ Y_{i}^{p_{i}+1} $ is 1, we get $c_{i}^{p_{1}+1}\,=\,0, $ so $ c_{i}\,=\,0.$  If 

 \[ A\,=\,\begin{bmatrix}
a &b_{1}&\cdots &b_{r} \\
c_{1}& & &  \\
\vdots &   & d_{i,j} &    \\
c_{r}&  &  & 
\end{bmatrix}\] then 
\begin{equation}
 \begin{bmatrix}
x\\
y_1\\
y_2\\
\vdots\\
y_r
\end{bmatrix}
\,=\, A  \begin{bmatrix}
t\\
u_1\\
u_2\\
\vdots\\
u_r
\end{bmatrix}.
\end{equation}
So, $ A \,\in\, \textrm{GL}_{r+1}(\mathbb{Z}).$ But $A$ has one column completely zero, a contradiction.  

\underline{Case 2 :} $ \gamma \neq 0$

We have 
\begin{equation}\label{eq}
    (\sum_{i}b_{i}U_{i}+a)^{n+1} \,=\, \sum_{i} Q_{i}(\underline{U},1) g_{i}(U_{i},1) +\alpha.
\end{equation}

Let $z_i\,\in\, Z_i$ for $1\,\leq\, i\, \leq\, r,$ then putting $ U_{i}\,=\,z_{i}$ for $i\, \geq \,2  $ we get $ g_{1}(U_{1},1) \big{\vert} (b_{1}U_{1}+ \sum_{i\,>\,1} b_{i})^{n+1}-\alpha .$ As $ \alpha \neq 0$, $ (b_{1}U_{1}+ \sum_{i\,>\,1}b_{i}z_{i}+a)^{n+1} $ has distinct  roots. So, $ g_{1}(U_1, 1)$ has distinct roots. Similarly, each $g_{i}(U_i,1)$ has distinct roots. 

Putting $U_i\,=\,z_i$ for all $i$ in \eqref{eq}, we get $(\sum_i b_{i}z_{i}+a)^{n+1}\,=\,\alpha . $

Let $ C$ be the circle $\{ \omega\,\in\, \mathbb{C} \medspace|\medspace  \omega^{n+1} \,=\, \alpha \}.$ So we have 
\begin{equation}\label{sumbizi}
    \sum_i b_{i}Z_{i}+a\,\in\,  C. 
\end{equation}

\underline{Claim 1}: $p_{i}\,=\,1 $ for all $i$

    \begin{proof}
        
     Suppose not, say $ p_{1}\, \geq\, 2 .$ Fix $ z_i\,\in\, Z_i$ for $3\,\leq\, i\, \leq\, r,$ fix $ z'_{2}\neq z_{2}$ in  $Z_2$ (possible as we have  deg $g_{2}(U_2,1)\,=\, p_{2}+1 \,\geq\, 2$ and $ g_{2}(U_2,1) $  has distinct roots). Let $ A_{1}\,=\, b_{1}Z_{1}+ b_{2}z_{2}+ \dots + b_{r}z_{r}, A_{2} \,=\, b_{1}Z_{1}+ b_{2}z'_{2}+ \dots + b_{r}z_{r}.$ So we have 
    $ A_{2} \,=\, A_{1} + b_{2}(z'_{2}- z_{2}).$
   So we have $ A_{1},A_{2}\subseteq C$, so $ A_{1} \subset C \cap [ C- b_{2}(z'_{2}-z_{2})].$ Since $ b_{2}(z_{2}^{'}-z_{2}) \neq 0$, $C$ and $C-b_{2} (z'_{2}-z_{2}) $ are distinct circles. Two distinct circles can intersect in at most two points, so $|A_{1}|\,\leq\, 2.$ But this contradicts $$ |A_{1}| \,=\,  \textnormal{number of roots of} \medspace g_{1}(U_1,1)  \,=\, p_{1}+1\, \geq\, 3. $$  This proves the claim. 
\end{proof}
So, $|Z_i|\,=\,2$ for all $i.$
For $1\,\leq\, i\, \leq\, r,$ let $l_i$ be the line in $\mathbb{C}$ joining the elements of $Z_i$.

\underline{Claim 2:} $r\,=\,2$, one of $g_1(U_1,1), g_2(U_2,1)$ has real roots, other has non-real roots.

\begin{proof}
    Since $g_i(U_i,1)$ has real coefficients, $l_i$ is either the real line or a line perpendicular to the real line. It suffices to show that for $i\neq j$, $l_i$ and $l_j$ are not parallel.
 Fix $z_i\,\in\, Z_i$ for $3\,\leq\, i\, \leq\, r.$ We have $b_1 Z_1+b_2Z_2\,\in\, C-\sum_{3\,\leq\, i\,\leq \,r}b_{i}z_{i} -a .$ By Lemma \ref{setsum} $(1)$, We get $l_i,l_j$ are not parallel.
 \end{proof}
Say $ g_{1}(U_1,1)$ has non-real roots. If $ n\,=\,1,$ $F_{1}$ is split, so $g_{1}(U_1,1)$ has both roots real $ ( F_{1} \,=\, \mathcal{O}(a) \oplus \mathcal{O}(b) \Rightarrow g_{1}(U_{1},1) \,=\, (U_{1}-a) (U_{2}-b))$. This is a contradiction. So we have $ n\, \geq\, 2.$     

Let $ A \,\in\, \textrm{GL}_{3}(\mathbb{Z})$ be such that 
\begin{equation*}
 \begin{bmatrix}
x\\
y_1\\
y_2\\
\end{bmatrix}
\,=\, A  \begin{bmatrix}
t\\
u_1\\
u_2\\
\end{bmatrix}.
\end{equation*}
\eqref{sumbizi} and Lemma \ref{setsum} applied to $b_1Z_1, b_2Z_2$ shows $a+ b_1\Bar{Z_1}+ b_2\Bar{Z_2}\,=\,0.$
Now Lemma \ref{alg}, gives a contradiction. 
\end{proof}
\begin{theorem} \label{auto}
Let $n,r$ be positive integers, and let $\underline{E}\,=\,(E_{1},\cdots,E_{r })$, $\underline{F}\,=\,(F_{1},\cdots,F_{r })$  be $r$-tuples of vector bundles of rank $\,\geq\, 2$ over $\mathbb{P}^n$. Suppose $ \phi\,:\, \mathbb{P}(\underline{E})\to \mathbb{P}(\underline{F})$ is an isomorphism.
Then $ \phi$ is over an automorphism of $ \mathbb{P}^n$, unless one of the following holds:
 \begin{enumerate}
\item[$(1)$] All $E_{i}, F_{j}$'s are trivial up to line bundle twists.
\item[$(2)$] Up to a permutation of $ E_{i}$'s and a permutation of $F_{j}$'s, and line bundle twists, we have $ E_{1} \,\cong\, F_{1} \,\cong\, T_{\mathbb{P}^n},$ and all $ E_{i},F_j$'s are trivial for  i,\, j \,>\,1.
\end{enumerate}
\end{theorem}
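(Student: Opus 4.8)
The plan is to translate the conclusion into a statement about a single Picard class. Following the notation of Lemma \ref{factor}, set $t=h^{\ast}\mathcal{O}_{\mathbb{P}^n}(1)$, $u_i=h_i^{\ast}\mathcal{O}_{\mathbb{P}(F_i)}(1)$ and $x=(f\circ\phi^{-1})^{\ast}\mathcal{O}_{\mathbb{P}^n}(1)$, so that $\{t,u_1,\dots,u_r\}$ is a $\mathbb{Z}$-basis of $\operatorname{Pic}(\mathbb{P}(\underline{F}))$ and $\phi$ determines a matrix $A\in\mathrm{GL}_{r+1}(\mathbb{Z})$. The first observation is that $\phi$ is over an automorphism of $\mathbb{P}^n$ if and only if $x=t$: if $\phi$ lies over $\psi$ then $x=h^{\ast}(\psi^{-1})^{\ast}\mathcal{O}(1)=t$, and conversely, since $h$ and $f\circ\phi^{-1}$ both have connected fibres with $h_{\ast}\mathcal{O}=\mathcal{O}=(f\circ\phi^{-1})_{\ast}\mathcal{O}$, the equality $x=t$ of line bundles realises both as the morphism attached to the complete linear system $|t|$, so they differ by an automorphism of $\mathbb{P}^n$. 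Thus the task is to decide when $x=t$, and otherwise to force alternatives $(1)$ or $(2)$.

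By Lemma \ref{factor}, after reindexing, $f\circ\phi^{-1}$ factors as $g\circ h_1$ for a morphism $g\colon\mathbb{P}(F_1)\to\mathbb{P}^n$, so that $x=at+b_1u_1$ with the remaining coefficients zero and $g^{\ast}\mathcal{O}(1)=a\,\pi_1^{\ast}\mathcal{O}(1)+b_1u_1$. If $b_1=0$, then $x=at$; the first row of the unimodular matrix $A$ is $(a,0,\dots,0)$, which forces $a=\pm1$, and $a>0$ because $x$ is nef and nonzero, so $x=t$ and $\phi$ is over an automorphism. This is the generic case and needs no exception, so it remains to treat $b_1\neq0$, where $b_1\geq1$ by nefness and $x\neq t$, so that $\phi$ is genuinely not over an automorphism; here I must produce $(1)$ or $(2)$.

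Assume $b_1\neq0$. I would first determine $F_1$. As $g$ maps to $\mathbb{P}^n$, the class $x$ satisfies $x^{n+1}=g^{\ast}(\mathcal{O}(1)^{n+1})=0$ in $A^{\ast}(\mathbb{P}(F_1))=\mathbb{Z}[t,u_1]/(t^{n+1},g_1(u_1,t))$, where $g_1$ is the homogenised Chern polynomial of $F_1$. Writing $u_1=(x-at)/b_1$ presents this ring as $(\mathbb{Z}[t]/t^{n+1})[x]/\prod_k(x-\mu_kt)$ with $\mu_k=a+b_1\rho_k$ and $\rho_k$ the roots of $g_1(u_1,1)$, and the relation $x^{n+1}=0$ becomes the membership $x^{n+1}\in(t^{n+1},\prod_k(x-\mu_kt))$, a purely algebraic condition on the multiset $\{\mu_k\}$. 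Combined with Assumption $(\ast)$ and the circle/regular-polygon analysis of Lemmas \ref{setsum} and \ref{alg} (equivalently, the projective-bundle classification of \cite{Sa}), this membership forces either all $\mu_k$ equal, whence $g_1(u_1,t)=u_1^{q_1+1}$ and $F_1$ is trivial up to a twist with $q_1=n$, or the $\mu_k$ to form $n$ of the $(n+1)$ vertices of a regular polygon, whence $F_1\cong T_{\mathbb{P}^n}$ up to a twist with $q_1=n-1$ and $b_1=1$.

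Finally I would propagate triviality to the remaining factors. The fibre of $f\circ\phi^{-1}=g\circ h_1$ over a point $z\in\mathbb{P}^n$ equals $h_1^{-1}(g^{-1}(z))\cong\mathbb{P}_{W}(F_2|_{W},\dots,F_r|_{W})$, where $W=\pi_1(g^{-1}(z))$ is all of $\mathbb{P}^n$ in the trivial case and a hyperplane $\cong\mathbb{P}^{n-1}$ in the tangent case; on the other hand this fibre is the $\phi$-image of a fibre of $f$, hence isomorphic to the product $\prod_i\mathbb{P}^{p_i}$. Since $p_1=\dim W$, this product is itself a trivial multiprojective bundle over $W$, and Theorem \ref{A} in dimension $\dim W$ shows each $F_j|_{W}$ $(j\geq2)$ is trivial up to a twist; in the trivial case $W=\mathbb{P}^n$ this already concludes, while in the tangent case, letting $z$ vary over all hyperplanes and then restricting to lines, the classification of balanced uniform bundles upgrades this to $F_j$ trivial up to a twist on $\mathbb{P}^n$. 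The symmetric argument applied to $\phi^{-1}$ yields the same for the $E_i$, and matching the unique nontrivial factor on each side produces exactly case $(1)$ or case $(2)$. The main obstacle is precisely this propagation: extracting triviality of $F_j|_{W}$ from an abstract isomorphism of a fibre with a product of projective spaces requires the lower-dimensional instance of Theorem \ref{A}, so the whole argument must be arranged as a joint induction on $n$, with the base case $n=1$—where every bundle splits and the tangent alternative cannot occur—treated by hand.
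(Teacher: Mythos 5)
Your outline follows the paper's architecture (reduce via Lemma \ref{factor} to a factorization $f\circ\phi^{-1}=g\circ h_1$, classify $F_1$, propagate triviality to the other factors through fibres), and your criterion that $\phi$ is over an automorphism of $\mathbb{P}^n$ if and only if $x=t$, as well as the case $b_1=0$, are fine. The first genuine gap is the classification of $F_1$ when $b_1\neq 0$. Everything you use there is Chow-theoretic: the membership $x^{n+1}\in\bigl(t^{n+1},g_1(u_1,t)\bigr)$ depends only on the Chern classes of $F_1$, so at best it can constrain the Chern polynomial $g_1$, never the isomorphism class of the bundle. The jump ``whence $F_1$ is trivial up to a twist'' (and likewise ``whence $F_1\cong T_{\mathbb{P}^n}$ up to a twist'') is therefore unjustified, and as a purely algebraic implication it is false: on $\mathbb{P}^1$ the bundle $F_1=\mathcal{O}(1)\oplus\mathcal{O}(-1)$ has $g_1(u_1,t)=u_1^2$, its Chow ring $\mathbb{Z}[t,u_1]/(t^2,u_1^2)$ contains the unimodular class $x=u_1$ with $x^{2}=0$ and Assumption $(\ast)$ holds, yet $F_1$ is not trivial up to a twist and $\mathbb{P}(F_1)$ (the Hirzebruch surface $\mathbb{F}_2$) admits no second fibration. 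What is actually needed is geometric: one must show that $g\colon\mathbb{P}(F_1)\to\mathbb{P}^n$ is itself a projective-bundle structure, and only then invoke \cite{Sa}; your parenthetical ``equivalently, the projective-bundle classification of \cite{Sa}'' is not an equivalence, since \cite{Sa} requires precisely that hypothesis. The paper supplies it with Lemma \ref{contr}, applied to $Y=\mathbb{P}(F_1)$ and the map $h_1\circ\phi$, which shows that $g$ realizes $\mathbb{P}(F_1)$ as some $\mathbb{P}(E_i)$ over $\mathbb{P}^n$; this step is absent from your argument. (Also, Lemmas \ref{setsum} and \ref{alg} cannot play the role you assign them: in the paper they operate inside the proof of Lemma \ref{factor}, to rule out $x$ involving two or more of the $u_i$, and they concern a pair of quadratic Chern polynomials; they say nothing about the possible shapes of a single $g_1$ of arbitrary degree.)

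The second gap is a circularity in your propagation step. Replacing the paper's citation of \cite[Theorem 3.17]{BSV1} by ``Theorem \ref{A} in dimension $\dim W$'' is not repaired by an induction on $n$: in the trivial case $W=\mathbb{P}^n$, so you would be invoking Theorem \ref{A} (hence Theorem \ref{auto}, from which it is deduced) over the very same $\mathbb{P}^n$ you are working on; the dimension does not drop, so an induction on $n$ with base case $n=1$ is not well-founded there. What does drop is the number of factors: the fibre is a multiprojective bundle with $r-1$ factors. The induction must therefore be organized on $r$ (jointly with $n$ if you like), with base case $r=1$ given by \cite{Sa}; then the trivial case uses the $(r-1)$-factor statement over $\mathbb{P}^n$ and the tangent case uses it over $\mathbb{P}^{n-1}$, both strictly smaller. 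With that reorganization, and with Lemma \ref{contr} inserted as above, your outline does assemble into a proof; your ``restrict to lines / uniform bundles'' finish in the tangent case is the same as the paper's appeal to \cite[Theorem 3.2.1]{Ok}, modulo the small point, handled in the paper via first Chern classes, that the twist trivializing $F_j|_{H}$ must first be chosen independently of the hyperplane $H$.
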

\begin{proof}
Let $p_1,p_2,\cdots,p_r$ and $q_1, q_2,\cdots,q_r$ be positive integers such that rank $E_i\,=\,p_i+1$, and rank $F_i\,=\,q_i+1$ for all $i$.
Suppose $ \phi$ is not an automorphism over $ \mathbb{P}^n.$ We show that either all $F_i$'s are trivial up to line bundle twists, or up to a permutation of $F_i$'s, we have $F_{1} \,\cong\, T_{\mathbb{P}^n},$ and all $F_i$'s are trivial up to line bundle twists for $ i\,>\,1.$ The analogous statement for $E_i$'s would be followed by symmetry or by a similar argument.
By Lemma \ref{factor}, we can assume without loss of generality that we have a commutative diagram 
\begin{center}
\begin{tikzcd}
 \mathbb{P}(\underline{E}) \arrow[r,swap]{ur}{\sim}[swap]{\phi} \arrow[dd, "f"] 
& \mathbb{P}(\underline{F}) \arrow[d, "h_{1}"]    \\  & \mathbb{P}(F_{1}) \arrow[d,"\pi_1"] \arrow[ld, "\tau"]  \\
\mathbb{P}^n  & \mathbb{P}^{n}
\end{tikzcd}.
\end{center}
By Lemma \ref{contr} applied to $Y\,=\,\mathbb{P}(F_1)$,  $\tau$ is a projective bundle. The map $\tau$ does not factor through $\pi_1$, as $ \phi$ is not over an automorphism of $ \mathbb{P}^n.$ So $ \mathbb{P}(F_{1})$ has two different projective bundle structure over $ \mathbb{P}^n.$ Then, by \cite{Sa}, two cases can happen:

\underline{Case 1:} $F_{1}\,=\,\mathcal{O}_{\mathbb{P}^n}^{n+1}$, up to a line bundle twist. 

So, $\tau\,:\,(\mathbb{P}^n)^2\to \mathbb{P}^n$ is projection onto one of the factors. For $z\,\in\, \mathbb{P}^n$, we have $f^{-1}(z) \,\cong\, h_{1}^{-1} \tau^{-1}(z), $ so 

  $$ \prod_{i} \mathbb{P}^{p_{i}} \,\cong\, \mathbb{P}_{\mathbb{P}^n}(F_{2},\cdots , F_{r}).$$ So, each $ F_{i}$ for $i\, \geq\, 2$ is trivial up to line bundle twists by {\cite[Theorem 3.17]{BSV1}}.   

  \underline{Case 2} : $F_{1} \,=\, T_{\mathbb{P}^n}$ up to line bundle twist. 

So, if we can identify $\mathbb{P}_{\mathbb{P}^n}(F_1)$ as a divisor in $\mathbb{P}^n\times \mathbb{P}^n$ of degree $(1,1)$ such that $\tau$ and $\pi_1$ are the projections onto the two factors. Hence, 
$$  \{ \pi_1( \tau^{-1}(z)) \medspace | \medspace z \,\in\, \mathbb{P}^n \}$$ is the set of all hyperplanes in $ \mathbb{P}^n  $. For $z\,\in\, \mathbb{P}^n,$ we have $ f^{-1}(z) \,\cong\, h_{1}^{-1} \tau^{-1}(z),$ so 

$$\prod_{i} \mathbb{P}^{p_i} \,\cong\, \mathbb{P}_{\pi_1( \tau^{-1}(z))}(F_{2},\cdots , F_{r}). $$ 
By {\cite[Theorem 3.17]{BSV1}} $,  F_i \mid_{\pi_1( \tau^{-1}(z))}$ is trivial up to line bundle twists, for all $ z \,\in\, \mathbb{P}^n$ and $i\,\geq \,2$. Hence, for each $i\,\geq\, 2$, $F_i$ restricted to any hyperplane in $\mathbb{P}^n$ is trivial up to line bundle twist. Looking at the first Chern class of $F_i$, we see that there is a line bundle $L_i$ on $\mathbb{P}^n$ such that the restriction of $F(L_i)$ to every hyperplane is trivial. By {\cite[Theorem 3.2.1]{Ok}}, $F_i(L_i)$ is trivial. So, $F_{2}, \dots F_{r}$ are trivial up to line bundle twists.
\end{proof}
Now we are ready to prove Theorems \ref{A} and \ref{autmult}, which will easily follow from Theorem \ref{auto} and the following Lemma.
 \begin{lemma} \label{iso}
      Let $X$ be a variety, $n,r \,\geq\, 1$, and  let $\underline{E}=(E_1, E_2,\cdots, E_r),$ $ \underline{F}=(F_1, F_2,\cdots, F_r)$ be $r$-tuples of vector bundles over $X$ of rank $\,\geq\, 2$. Let $\phi\,:\, \mathbb{P}_{X}(\underline{E}) \to \mathbb{P}_{X}(\underline{F})$ be an isomorphism over $X$. Then there is unique $\sigma\,\in\, S_r,$ and unique isomorphisms $\phi_i\,:\, \mathbb{P}_{X}(E_i) \to \mathbb{P}_{X}(F_{\sigma(i)})$ over $X$, such that $\phi$ is the composition $\mathbb{P}_{X}(\underline{E})\,\xrightarrow{\,\,\,\prod_i \phi_i\,\,\,}\, \mathbb{P}_{X}(\sigma(\underline{F}))\,\xrightarrow{\,\,\,T_{\sigma}^{-1}\,\,\,}\, \mathbb{P}_{X}(\underline{F}).$ Here $\sigma(\underline{F})\,=\,(F_{\sigma(1)}, F_{\sigma(2)},\cdots, F_{\sigma(r)})$, and $\mathbb{P}_{X}(\underline{F})\,\xrightarrow{\,\,\,T_{\sigma}\,\,\,}\, \mathbb{P}_{X}(\sigma(\underline{F}))$ is the permutation of the factors.
  \end{lemma}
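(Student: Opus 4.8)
The plan is to exploit the splitting of the Picard group of a multiprojective bundle together with the rigidity of fibrewise contractions. Write $f_E\colon \mathbb{P}_X(\underline{E})\to X$ and $f_F\colon \mathbb{P}_X(\underline{F})\to X$ for the structure maps, $p_i\colon \mathbb{P}_X(\underline{E})\to \mathbb{P}_X(E_i)$ and $h_j\colon \mathbb{P}_X(\underline{F})\to \mathbb{P}_X(F_j)$ for the projections, and set $\xi_i=p_i^\ast\mathcal{O}_{\mathbb{P}(E_i)}(1)$, $\eta_j=h_j^\ast\mathcal{O}_{\mathbb{P}(F_j)}(1)$. As recalled in the proof of Lemma~\ref{contr}, one has $\mathrm{Pic}(\mathbb{P}_X(\underline{E}))=f_E^\ast\mathrm{Pic}(X)\oplus\bigoplus_i\mathbb{Z}\xi_i$, and similarly for $\underline{F}$. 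Since $\phi$ is over $X$ it carries fibres of $f_E$ isomorphically onto fibres of $f_F$; hence $\phi^\ast$ sends $f_F^\ast\mathrm{Pic}(X)$ onto $f_E^\ast\mathrm{Pic}(X)$ and descends to a lattice isomorphism $\bar\phi^\ast\colon \bigoplus_j\mathbb{Z}\bar\eta_j\to\bigoplus_i\mathbb{Z}\bar\xi_i$ of the relative Picard groups. A class $\sum a_i\bar\xi_i$ is nef on every fibre of $f_E$ (a product of projective spaces) if and only if all $a_i\ge 0$, so the relative nef cone is the standard positive orthant in each basis. As $\phi$ maps fibres isomorphically, $\bar\phi^\ast$ preserves relative nefness, hence takes one positive orthant onto the other; a $\mathbb{Z}$-linear isomorphism doing this is a permutation matrix. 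This produces $\sigma\in S_r$ with $\phi^\ast\eta_{\sigma(i)}=\xi_i+f_E^\ast M_i$ for some $M_i\in\mathrm{Pic}(X)$.

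Next I would show that $h_{\sigma(i)}\circ\phi$ factors through $p_i$. Let $C$ be an irreducible curve contained in a fibre of $f_E$, so that $f_E^\ast M_i\cdot C=0$; then $\xi_i\cdot C=0$ if and only if $\phi^\ast\eta_{\sigma(i)}\cdot C=0$, i.e. if and only if $\eta_{\sigma(i)}\cdot\phi_\ast C=0$. Since $\phi(C)$ lies in a fibre of $f_F$, on which $\mathcal{O}_{\mathbb{P}(F_{\sigma(i)})}(1)$ is ample, this says precisely that $C$ is contracted by $p_i$ exactly when $\phi(C)$ is contracted by $h_{\sigma(i)}$. Every fibre of $p_i$ is a product of projective spaces, connected and covered by curves $C$ with $\xi_i\cdot C=0$; thus $h_{\sigma(i)}\circ\phi$ is constant on the fibres of $p_i$. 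Because $p_i$ is proper with $p_{i\ast}\mathcal{O}=\mathcal{O}$ (its fibres are connected products of projective spaces), rigidity of such $\mathcal{O}$-connected contractions yields a unique morphism $\phi_i\colon \mathbb{P}_X(E_i)\to\mathbb{P}_X(F_{\sigma(i)})$ over $X$ with $h_{\sigma(i)}\circ\phi=\phi_i\circ p_i$.

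Then I would assemble the $\phi_i$. By the universal property of the fibre product they define $\Phi=\prod_i\phi_i\colon \mathbb{P}_X(\underline{E})\to\mathbb{P}_X(\sigma(\underline{F}))$, and composing with the $i$-th projection $\mathrm{pr}_i$ of $\mathbb{P}_X(\sigma(\underline{F}))$ gives $\mathrm{pr}_i\circ\Phi=\phi_i\circ p_i=h_{\sigma(i)}\circ\phi=\mathrm{pr}_i\circ T_\sigma\circ\phi$, using $\mathrm{pr}_i\circ T_\sigma=h_{\sigma(i)}$. As this holds for every $i$, the universal property gives $\Phi=T_\sigma\circ\phi$, that is $\phi=T_\sigma^{-1}\circ\prod_i\phi_i$. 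Applying the construction of the previous paragraph to the isomorphism $\Phi^{-1}$ over $X$—whose induced map on relative Picard groups is, in these bases, again the identity permutation—produces morphisms $\psi_i\colon \mathbb{P}_X(F_{\sigma(i)})\to\mathbb{P}_X(E_i)$ with $p_i\circ\Phi^{-1}=\psi_i\circ\mathrm{pr}_i$; then $\psi_i\circ\phi_i\circ p_i=p_i$ and $\phi_i\circ\psi_i\circ\mathrm{pr}_i=\mathrm{pr}_i$, and since $p_i$ and $\mathrm{pr}_i$ are dominant (hence epimorphisms of varieties) each $\phi_i$ is an isomorphism. For uniqueness, any decomposition of the asserted form induces on relative Picard groups the permutation $\bar\eta_{\sigma(i)}\mapsto\bar\xi_i$, which is determined by $\phi$, so $\sigma$ is forced; and given $\sigma$, the identity $h_{\sigma(i)}\circ\phi=\phi_i\circ p_i$ together with surjectivity of $p_i$ determines each $\phi_i$.

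The main obstacle is the factoring step of the second paragraph: one must keep in mind that $\mathcal{O}(1)$ on a single $\mathbb{P}(F_j)$ is only \emph{relatively} ample over $X$, so the equivalence ``contracted by $p_i$'' $\Longleftrightarrow$ ``contracted by $h_{\sigma(i)}$'' is valid only for curves inside the fibres of $f_E$ and $f_F$—which is, however, exactly the situation in which the rigidity lemma is invoked. The remaining ingredients, namely the Picard splitting (already used in Lemma~\ref{contr}) and the fact that a lattice automorphism preserving the positive orthant is a permutation, are routine.
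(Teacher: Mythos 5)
Your proposal is correct, but it reaches the conclusion by a different route than the paper, so a comparison is worth recording. The paper deduces Lemma \ref{iso} almost immediately from Lemma \ref{contr}: for each $j$ the composite $q_j\circ\phi$ (with $q_j$ the projection of $\mathbb{P}_X(\underline{F})$ and $\phi$ the given isomorphism) is proper over $X$ and pushes $\mathcal{O}$ forward to $\mathcal{O}$, so Lemma \ref{contr} identifies it, up to an isomorphism of the target over $X$, with one of the projections $p_i$; comparing (relative) Picard ranks forces the target to be a single $\mathbb{P}_X(E_i)$, which produces $\sigma$ \emph{and} the isomorphisms $\phi_i$ in one stroke, with no separate invertibility argument. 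The paper then checks that the discrepancy $T_\sigma^{-1}\circ(\prod_i\phi_i)\circ\phi^{-1}$ commutes with every projection $q_j$ and hence is the identity (verified fibrewise on products of projective spaces) --- which is exactly the content of your appeal to the universal property of the fibre product. You, by contrast, never invoke Lemma \ref{contr}: you extract $\sigma$ from the action of $\phi^\ast$ on the relative Picard lattice via the fact that an orthant-preserving lattice isomorphism is a permutation matrix, you build each $\phi_i$ from the curve-contraction equivalence plus the rigidity lemma for proper morphisms with $p_{i\ast}\mathcal{O}=\mathcal{O}$, and you must then rerun the factorization on $\Phi^{-1}$ to invert the $\phi_i$. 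What your version buys is self-containedness and an explicit lattice-theoretic explanation of where the permutation comes from; what the paper's buys is brevity, since Lemma \ref{contr} --- whose own proof (nef coefficients, factorization, finiteness plus $\mathcal{O}$-connectedness) is close in spirit to your nef-cone-and-rigidity argument --- is already available. Both arguments ultimately rest on the same two facts, the Picard decomposition of a multiprojective bundle and ampleness/nefness along fibres, so the difference is one of packaging rather than substance; your write-up is sound as it stands, the only loose point being the parenthetical justification of $p_{i\ast}\mathcal{O}=\mathcal{O}$ by connectedness of fibres alone (it is true, but because $p_i$ is locally on its target a product with a product of projective spaces, not merely because fibres are connected).
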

\begin{proof}
We only need to show the existence, and the uniqueness is clear.    Let 
\begin{equation*}
\mathbb{P}_{X}(\underline{E})\,\xrightarrow{\,\,\,p_{i}}\mathbb{P}_{X}(E_{i}) \qquad \textrm{and} \qquad  \mathbb{P}_{X}(\underline{F})\,\xrightarrow{\,\,\,q_{i}\,\,\,}\,\mathbb{P}_{X}(F_{i}) 
\end{equation*}
be the projections. By Lemma \ref{contr}, $p_{i}$'s are exactly the contractions $\mathbb{P}_{X}(\underline{E})\to Y$ over $X$ such that the Picard rank of $Y$ is $ 2.$ An analogous statement holds for $q_{i}$'s. Since $\phi$ is an isomorphism over $X$, there is $\sigma\,\in\, \textrm{S}_{r}$ such that for all $i$, we have a commutative diagram with $\phi_{i}$ isomorphism:
\begin{center}
\begin{tikzcd}
\mathbb{P}_{X}(\underline{E}) \arrow[r, "\phi"] \arrow[d, "p_{i}"'] & \mathbb{P}_{X}(\underline{F}) \arrow[d, "q_{\sigma(i)}"] \\
\mathbb{P}_{X}(E_{i})\arrow[r, "\phi_{i}"'] \arrow[dr, "p"'] &\mathbb{P}_{X}(F_{\sigma(i)})  \arrow[d, "q"] \\
& X
\end{tikzcd}
\end{center}
The composition $\psi\,:\,\mathbb{P}_{X}(\underline{F})\,\xrightarrow{\,\,\,\phi^{-1}\,\,\,}\, \mathbb{P}_{X}(\underline{E})\,\xrightarrow{\,\,\,\prod_i \phi_i\,\,\,}\, \mathbb{P}_{X}(\sigma(\underline{F}))\,\xrightarrow{\,\,\,T_{\sigma}^{-1}\,\,\,}\, \mathbb{P}_{X}(\underline{F})$ is an automorphism of $\mathbb{P}_{X}(\underline{F})$ commuting with every projection $q_i$. So, for every $x\,\in\, X$, if $\psi_x$ denotes the restriction of $\psi$ to the fibre over $x$, then $\psi_x$ is an automorphism of the product of projective spaces $\prod_i \mathbb{P}(F_{i,x})$ commuting with projection to every factor. Thus, $\psi_x$ is identity for every $x$, hence $\psi$ is the identity map.
\end{proof}
\begin{proof}[Proof of Theorem \ref{A}]
Let $\mathbb{P}_{\mathbb{P}^{n}}(\underline{E})\,\xlongrightarrow{\phi}\mathbb{P}_{\mathbb{P}^{n}}(\underline{F})$ be an isomorphism. If $(1)$ or $(2)$ of Theorem \ref{auto} occurs, then clearly the conclusion of Theorem \ref{A} holds. Now, suppose neither $(1)$ nor $(2)$ of Theorem \ref{auto} holds. By Theorem \ref{auto}, $\phi$  is over an automorphism $\psi$ of $\mathbb{P}^{n}$. Therefore, we have the isomorphism $\tilde{\phi}\,:\, \mathbb{P}_{\mathbb{P}^{n}}(\underline{E})\to \mathbb{P}_{\mathbb{P}^{n}}(\psi^*\underline{F})$ over $\mathbb{P}^{n}$. Now, by Lemma \ref{iso}, there is $\sigma\,\in\, S_r$ such that $\mathbb{P}_{\mathbb{P}^{n}}(E)\,\cong\, \mathbb{P}_{\mathbb{P}^{n}}(F_{\sigma(i)})$ over $\mathbb{P}^n$ for all $i$. Hence, the conclusion of Theorem \ref{A} holds.
\end{proof}
        
\begin{proof}[Proof of Theorem \ref{autmult}]
Let $ \phi \,\in\, \textnormal{Aut}\medspace P$. By Theorem \ref{auto}, $ \phi$ is over a (necessarily unique) automorphism $ \psi$ of $ \mathbb{P}^n$. So, we have an isomorphism 
    $$ \widetilde{\phi}\,:\, P \,\longrightarrow\, \psi^{\ast} P$$
    over $ \mathbb{P}^n$. By Lemma \ref{iso}, $ \psi \,\in\, H.$ This gives a group homomorphism $ \textnormal{Aut} \medspace P \,\xrightarrow {\,\,\,\eta\,\,\,}\, H.$ For $ \psi \,\in\,H,$ we have $ P \,\cong\, \psi^{\ast}P$ over $ \mathbb{P}^n$, hence there is $ \phi \,\in\, \textnormal{Aut}\,{P}$ with $ \eta (\phi)\,=\, \psi.$ Therefore, $ \eta$ is surjective. We have $ \textnormal{Ker} \medspace \eta \,=\, \textnormal{Aut}_{\mathbb{P}^n}(P)$. We have inclusions 
    $\prod_{i}S_{a_{i}} \xhookrightarrow{j_{1}} \textnormal{Aut}_{\mathbb{P}^n}P$ given by permutations of the factors of $P$, and
    $ \prod_{i}(\textnormal{Aut}_{\mathbb{P}^n} P_{i})^{a_{i}}\xhookrightarrow{j_{2}} \textnormal{Aut}_{\mathbb{P}^n}(P) $
    given by products of automorphism of each factor of $ P.$ Let im($j_1$), and im($j_2$) be the images of $j_1$ and $j_2$, respectively. Our goal is to show that $\textnormal{Aut}_{\mathbb{P}^n}(P)=\textnormal{im}(j_2)\rtimes \textnormal{im}(j_1)$.
    
Since $P_i$'s are pairwise non-isomorphic over $ \mathbb{P}^n$, by Lemma \ref{iso}, any $ \phi \,\in\, \textnormal{Aut}_{\mathbb{P}^n}(P)$ can be written in a unique way as $ \phi_{1} \cdot \phi_{2},$ where $ \phi_{1} \,\in\, \textnormal{im}(j_{1})$, $ \phi_{2} \,\in\,  \textnormal{im}(j_{2}).$ Thus, $ \textnormal{im} (j_{1}) \cap \textnormal{im}(j_{2})\,=\,1,$ $ \textnormal{im}(j_{1}) \cdot \textnormal{im}(j_{2})\,=\, \textnormal{Aut}_{\mathbb{P}^n}(P).$ Also, any element of $ \textnormal{im}(j_{1})$ normalizes $ \textnormal{im}(j_{2})$. Hence  $ \textnormal{im}(j_{2})$ is a normal subgroup of $ \textnormal{Aut}_{\mathbb{P}^n}(P)$. This implies $\textnormal{Aut}_{\mathbb{P}^n}(P)=\textnormal{im}(j_2)\rtimes \textnormal{im}(j_1)$.
    
Hence, $$\textnormal{Aut}_{\mathbb{P}^n}(P)\,\cong\, \prod_{i}(\textnormal{Aut}_{\mathbb{P}^n}P_{i})^{a_{i}} \rtimes \prod_{i}S_{a_{i}  } \,\cong\, \prod_{i}((\textnormal{Aut}_{\mathbb{P}^n}P_{i})^{a_{i}} \rtimes S_{a_{i}}).$$ 
\end{proof}
Finally, we describe the Aut$(P)$ if $(1)$ or $(2)$ in Theorem \ref{autmult} holds.
\begin{supplement}
\label{supp}
Let $ r \geq 0$ be an integer,  $ m_{1}, m_{2}, \cdots m_{r}$ distinct positive integers and $ n\geq 2$, $ a_{1},\cdots a_{r}$ positive integers. Then: 
\begin{enumerate}
\item  \textnormal{Aut} $\prod_{i=1}^r(\mathbb{P}^{m_{i}})^{a_{i}} \cong \prod_{i=1}^{r} 
[(\textnormal{Aut}\,\mathbb{P}^{m_{i}})^{a_{i}} \rtimes S_{a_{i}}].$ 
\vspace{2pt}
\item $\textnormal{Aut}\big(\mathbb{P}_{\mathbb{P}^n}(T_{\mathbb{P}^n}) \times \prod_{i=1}^{r}(\mathbb{P}^{m_{i}})^{a_{i}}\big) \cong \big(\textnormal{Aut} \,\mathbb{P}^n \rtimes \mathbb{Z}/2 \mathbb{Z} \big)\times \prod_{i=1}^{r} [(\textnormal{Aut}\,\mathbb{P}^{m_{i}})^{a_{i}} \rtimes S_{a_{i}}].$
\end{enumerate}
\end{supplement}
\begin{proof}

The same proof as {\cite[Theorem 2.7, Corollary 2.8]{Ha}} implies the following (see also {\cite[Theorem 4.1]{Og}}): If
$Y = \prod _{i} Y_{i}^{a_{i}}$ where $Y_i$'s are smooth projective varieties with irregularity $0$ and none of them is a nontrivial product, then, $\textnormal{Aut}(Y) \cong \prod_{i}(\textnormal{Aut}(Y_{i})^{a_{i}}\rtimes S_{a_{i}})$.

So, we only need to show  $$ \textnormal{Aut} (\mathbb{P}_{\mathbb{P}^n}(T_{\mathbb{P}^n}))\cong \textnormal{Aut}({\mathbb{P}^n}) \rtimes \mathbb{Z}/2 \mathbb{Z}.$$ Let $G= \textnormal{Aut} (\mathbb{P}(T_{\mathbb{P}^n}))$ any automorphism of $ \mathbb{P}^n$  induces an automorphism of $ \mathbb{P}_{\mathbb{P}^n}(T_{\mathbb{P}^n}),$ so we have inclusion 
    $ i: \textnormal{Aut}(\mathbb{P}^n) \hookrightarrow G .$ We can identify $ \mathbb{P}(T_{\mathbb{P}^n})$ as
      $$ D= \{ ([\underline{x}],[\underline{y}]) \in \mathbb{P}^n \times \mathbb{P}^n \medspace | \medspace \underline{x}^t \underline{y}=0 \}.$$ Swap the two factors of $ \mathbb{P}^n \times \mathbb{P}^n$ gives an order two automorphism $\eta$ of $ \mathbb{P}(T_{\mathbb{P}^n}),$ hence we have a subgroup $ H \leq G$ isomorphic to $ \mathbb{Z}/2\mathbb{Z}.$ Clearly, $H$ normalizes $ \textnormal{im}(i)$. It suffices to show $ G= H \cdot \textnormal{im}(i).$ 
      
      Let $ \phi \in G.$ We want to show $ \phi \in H \cdot \textnormal{im}(i).$ Composition by $ \phi$ would permute the elementary contractions of $ \mathbb{P}_{\mathbb{P}^n}(T_{\mathbb{P}^n}),$ so, composing $\phi$ by $ \eta $ if needed, we can assume that there are $ \psi_{1}, \psi_{2}\in \textnormal{Aut} (\mathbb{P}^n)$  such that following diagram
       \begin{center}
\begin{tikzcd}
D \arrow[r, "\phi"] \arrow[d,"\pi_{i}"]
& D \arrow[d, "\pi_{i}" ] \\
\mathbb{P}^n  \arrow[r,"\psi_{i}" ]
& |[, rotate=0]| \mathbb{P}^n
\end{tikzcd}
\end{center}
commutes, $\pi_{1}, \pi_{2}$, are projection onto two factors. Composing $ \phi$ by an element of $ \textnormal{im}(i)$, we may assume $ \psi_{1}=id.$ There is $ (n+1) \times (n+1)$ invertible matrix $A_{2}$ over $ \mathbb{C}$ such that $ \psi_{2}([\underline{y}])= [A_{2}\underline{y}]$ for all $ \underline{y} \in \mathbb{P}^n$. So, 
$$ \phi([\underline{x}],[ \underline{y}])= ( [\underline{x}], [A_{2}\underline{y}]) \quad \text{ for all } \medspace [\underline{x}], [\underline{y}] \in \mathbb{P}^n.$$
Hence, if $ \underline{x}, \underline{y}\neq 0$  with $ \underline{x}^{t}\underline{y}=0$, we have $ \underline{x}^{t} A \underline{y}=0.$ This forces $A$ to be a scalar matrix, so $ \psi_{2}= id.$ So, $ \phi= id$, and we are done.
\end{proof} 
\section{Symmetric powers of projective bundles}
In this Section, we prove Theorems \ref{sym} and \ref{autsym}.
First, we need the following Lemma. For a projective variety $Y$, we denote the Picard rank of $Y$ by $\rho(Y).$ For a normal variety $X$, let Cl$(X)$ denote the divisor class group of $X.$
 \begin{lemma}\label{picard}
Let $ p,r,n$ be positive integers, $r\geq 2$, $E$ a vector bundle of $ p+1$ over $\mathbb{P}^n.$ Then $ \rho(S_{\mathbb{P}^n}^{r}\mathbb{P}(E))\,=\,2.$ 
\end{lemma}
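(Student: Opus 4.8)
The plan is to exhibit $S^r_{\mathbb{P}^n}\mathbb{P}(E)$ as the quotient of a smooth multiprojective bundle by the finite group $S_r$, and to read off its Picard rank from the $S_r$-invariants of the N\'eron--Severi group of that bundle. First I would set $Y=\mathbb{P}_{\mathbb{P}^n}(E,\ldots,E)$, the $r$-fold fibre product of $\mathbb{P}(E)$ over $\mathbb{P}^n$, so that by definition $S^r_{\mathbb{P}^n}\mathbb{P}(E)=Y/S_r$, with quotient map $\pi\colon Y\to Y/S_r$ a finite morphism of degree $r!$. The variety $Y$ is smooth, projective, and has $H^1(\mathcal{O}_Y)=0$, so $\operatorname{Pic}(Y)=NS(Y)$ is free of rank $r+1$. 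Writing $f\colon Y\to\mathbb{P}^n$ for the structure map and $\xi_1,\ldots,\xi_r$ for the relative hyperplane classes of the $r$ factors, the projective bundle formula gives $\operatorname{Pic}(Y)=\mathbb{Z}\,t\oplus\bigoplus_{i=1}^r\mathbb{Z}\,\xi_i$ with $t=f^*\mathcal{O}_{\mathbb{P}^n}(1)$; in particular $\rho(Y)=r+1$.

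Next, the $S_r$-action permutes the factors, hence fixes $t$ and permutes $\xi_1,\ldots,\xi_r$ by the standard permutation representation. Consequently $\big(NS(Y)_{\mathbb{Q}}\big)^{S_r}$ is spanned by the two independent classes $t$ and $\xi_1+\cdots+\xi_r$, so $\dim_{\mathbb{Q}}\big(NS(Y)_{\mathbb{Q}}\big)^{S_r}=2$.

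Finally I would compare $\rho(Y/S_r)$ with this invariant space. Since $Y$ is smooth and $S_r$ is finite, $Y/S_r$ has quotient singularities, hence is normal and $\mathbb{Q}$-factorial; moreover $H^1(\mathcal{O}_{Y/S_r})=H^1(\mathcal{O}_Y)^{S_r}=0$, so $\rho(Y/S_r)=\dim_{\mathbb{Q}}\operatorname{Cl}(Y/S_r)_{\mathbb{Q}}$. The $\mathbb{Q}$-factoriality makes the pullback $\pi^*$ well defined on $\operatorname{Cl}(-)_{\mathbb{Q}}$, and the transfer identities $\pi_*\pi^*=r!\cdot\mathrm{id}$ and $\pi^*\pi_*=\sum_{g\in S_r}g^*$ show that $\pi^*\colon\operatorname{Cl}(Y/S_r)_{\mathbb{Q}}\to\big(\operatorname{Cl}(Y)_{\mathbb{Q}}\big)^{S_r}=\big(NS(Y)_{\mathbb{Q}}\big)^{S_r}$ is an isomorphism. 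Combined with the previous step, this yields $\rho(S^r_{\mathbb{P}^n}\mathbb{P}(E))=2$.

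The step I expect to be the main obstacle is the last isomorphism $\operatorname{Cl}(Y/S_r)_{\mathbb{Q}}\cong\big(\operatorname{Cl}(Y)_{\mathbb{Q}}\big)^{S_r}$: one must be sure that pullback of Weil divisor classes along the (possibly ramified) map $\pi$ is well defined rationally, and that the transfer identities survive the ramification along the diagonals. Both are consequences of $Y/S_r$ being $\mathbb{Q}$-factorial together with the standard fact that for a finite group $G$ acting on a normal variety $Y$ one has $\operatorname{Cl}(Y/G)_{\mathbb{Q}}\cong\operatorname{Cl}(Y)_{\mathbb{Q}}^{\,G}$; I would either cite this directly or supply the short transfer argument. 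Note in particular that the codimension of the diagonal locus can be $1$ (when $p=1$), so one cannot simply delete the branch locus and argue on the free locus --- it is the use of $\mathbb{Q}$-coefficients in the transfer that makes the computation go through.
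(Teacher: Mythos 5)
Your proof is correct, but it takes a genuinely different route from the paper's. The paper never passes to the smooth cover $Y=\mathbb{P}_{\mathbb{P}^n}(E,\dots,E)$: instead it restricts the projection $\pi\colon S^{r}_{\mathbb{P}^n}\mathbb{P}(E)\to\mathbb{P}^n$ over an affine chart $U\cong\mathbb{A}^n$ on which $E$ trivializes, so that $\pi^{-1}(U)\cong S^{r}\mathbb{P}^{p}\times\mathbb{A}^n$, invokes the companion computation $\operatorname{Cl}(S^{r}\mathbb{P}^{p})\cong\mathbb{Z}\oplus\mathbb{Z}/2\mathbb{Z}$ from \cite{BansalSarkarVats2025_SymmetricPower}, and then uses the excision sequence $\mathbb{Z}\to\operatorname{Cl}(X)\to\operatorname{Cl}(\pi^{-1}U)\to 0$ for the irreducible divisor $\pi^{-1}(H)$ (left-exact because $X$ is complete) to conclude $\operatorname{rank}\operatorname{Cl}(X)=2$, converting this to $\rho=2$ via $\mathbb{Q}$-factoriality. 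You instead compute $\operatorname{Pic}(Y)_{\mathbb{Q}}$ upstairs by the projective bundle formula, take $S_r$-invariants, and descend through the transfer isomorphism $\operatorname{Cl}(Y/G)_{\mathbb{Q}}\cong\bigl(\operatorname{Cl}(Y)_{\mathbb{Q}}\bigr)^{G}$ for a finite group $G$; that isomorphism is indeed standard (see \cite{fulton2013intersection}, Example 1.7.6), and your concern about codimension-one ramification along the diagonals when $p=1$ is exactly the right one --- rational coefficients dispose of it. Both arguments still need the same two auxiliary inputs in some form: $\mathbb{Q}$-factoriality of the quotient (the paper cites \cite{Wil}; you derive it from quotient singularities) and a reason $\operatorname{Pic}^0$ does not contribute (you use $H^1(\mathcal{O})=0$; the paper gets finite generation of $\operatorname{Cl}$ directly from its exact sequence). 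What your route buys: it is self-contained modulo standard facts, avoiding the nontrivial computation of $\operatorname{Cl}(S^{r}\mathbb{P}^{p})$, and it generalizes at once to quotients by arbitrary subgroups of $S_r$. What the paper's route buys: integral rather than merely rational control of $\operatorname{Cl}(X)$, including its torsion, which comes from knowing $\operatorname{Cl}(S^{r}\mathbb{P}^{p})$ exactly.
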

      
\begin{proof}
Let $H \hookrightarrow \mathbb{P}^n$ be a hyperplane, $U\,=\, \mathbb{P}^n \backslash  H \,\cong\, \mathbb{A}^n.$ Let $ X\,=\, S_{\mathbb{P}^n}^{r} \mathbb{P}(E) \,\xrightarrow{\,\,\,\pi\,\,\,}\, \mathbb{P}^n$ be the projection. Any vector bundle on $\mathbb{A}^n$ is trivial, so $ \pi^{-1} U \,\cong\, S^{r} \mathbb{P}^p \times \mathbb{A}^n$. By {\cite[Proposition 6.6, Chapter 2]{Hart}} and {\cite[Theorem 1(ii)]{BansalSarkarVats2025_SymmetricPower}}, we have $$ \textnormal{Cl}(\pi^{-1}{U}) \,\cong\, \textnormal{Cl} (S^{r} \mathbb{P}^{p}) \,\cong\, \mathbb{Z} \oplus \mathbb{Z}/2 \mathbb{Z}.$$ Since $ \pi^{-1} H\,\cong\, S_{H}^{r} \mathbb{P}(E\mid_{H})$ is irreducible, by {\cite[Proposition 6.5, Chapter 2]{Hart}} we have an exact sequence 
$$ \mathbb{Z} \,\longrightarrow\, \textnormal{Cl}( X) \,\longrightarrow\, \textnormal{Cl}(\pi^{-1}(U)) \,\longrightarrow\, 0, $$ 
which is also left exact as $X$ is a complete variety (hence no nonzero multiple of $\pi^{-1}H$ can be linearly equivalent to 0).  
 So, rank $ \textnormal{Cl}(X) \,=\,2.$ By  {\cite[Lemma 5.16]{Wil}}, $ X$ is $ \mathbb{Q}$-factorial. Hence, $ \rho(X)\,=\,2.$
\end{proof}
Now we can prove Theorems \ref{sym} and \ref{autsym}.
\begin{proof}[Proof of Theorem \ref{sym}]
Let $ Z\,=\, S_{\mathbb{P}^n}^{r} \mathbb{P}(E)$,  $ W \,=\, S^{s}_{\mathbb{P}^m} \mathbb{P}(F)$, $ Z \,\xrightarrow{\,\,\,\pi_{1}} \mathbb{P}^n\,\,\,$\,, $ W \,\xrightarrow{\,\,\,\pi_{2}} \mathbb{P}^m\,\,\,$\, be the projections. So, $ \pi_{1}^{-1}(x) \,\cong\, S^{r}\mathbb{P}^{p}$, $ \pi_{2}^{-1}(y)\,\cong\, S^{s}\mathbb{P}^q $ for all $x \,\in\, \mathbb{P}^n, y \,\in\, \mathbb{P}^m.$ Let $X,Y$ be the images of the diagonal embeddings of $ \mathbb{P}(E)$, $ \mathbb{P}(F)$ in $ Z, W$ respectively. 

For a closed subset $N$ of a variety $M$, denote the codimension of $N$ in $M$ by codim$_M N$. For a reduced scheme $M$ of finite type over $\mathbb{C}$, let Sing $M$ be the singular locus of $M$ with reduced subscheme structure. For nonnegative integers $t$, define Sing$^t(M)$ inductively by Sing$^0(M)\,=\,M$, and Sing$^t(M) = $ Sing(Sing$^{t-1} M)$ for $t\,\geq\, 1$. By {\cite[Theorem 2]{BansalSarkarVats2025_SymmetricPower}}, for each $ x \,\in\, \mathbb{P}^n$ we have $$\textnormal{codim}_{\pi_1^{-1}(x)} \textnormal{Sing}(\pi_{1}^{-1}(x))\,=\,p, \quad\textnormal{Sing}^{r-1} \pi_{1}^{-1}(x)\,=\, X\cap \pi_{1}^{-1}(x), \quad \textnormal{Sing}^{r} \pi_{1}^{-1}(x)\,=\, \varnothing .$$ Since $ \pi_{1}$ is locally a product, it follows that: 
  $$ \textnormal{ codim}_{Z} \textnormal{Sing}Z\,=\, p, \quad \textnormal{Sing}^{r-1} Z\,=\, X, \quad \textnormal{Sing}^r Z\,=\, \varnothing. $$
  Similarly, $ \textnormal{codim}_{W} \textnormal{Sing}\,W\,=\, q$, $ \textnormal{Sing}^{s-1}W\,=\,Y$, $ \textnormal{Sing}^{s}W \,=\, \varnothing$.  So, 
 \begin{equation}\label{Bequ1} 
   p\,=\,q, r\,=\,s,
  \end{equation}
  and 
  \begin{equation}\label{Bequ2}
 \phi(X)\,=\,Y.
\end{equation}
Also, $ n+rp\,=\, \textnormal{dim}\, Z\,=\, \textnormal{dim}\, W\,=\, m+sq,$ So, \eqref{Bequ1} gives 
\begin{equation}\label{Bequ3}
    m\,=\,n.
\end{equation}

By Lemma \ref{picard}, the Picard rank of $Z$ is $2.$ It suffices to show that $ \pi_{1}\, :\, Z \,\longrightarrow\, \mathbb{P}^n$, $ \pi_{2} \circ \phi\,:\, Z \,\longrightarrow\, \mathbb{P}^n$ are the same contraction, as then $\phi $ would be over an automorphism $ \psi$ of $ \mathbb{P}^n$, and \eqref{Bequ2} will give  $ \mathbb{P}(E) \,\cong\, \mathbb{P}(\psi^{\ast}F)$ over $ \mathbb{P}^n,$ hence $ E \,\cong\, \psi^{\ast}F \otimes L$ for a line bundle $L$ on $ \mathbb{P}^n.$   
 
 Suppose $p_1$ and $\pi_2\circ \phi$ are different contractions of $Z$. By Lemma \ref{picard}, $ \rho(Z)\,=\,2.$ So, $p_1$ and $\pi_2\circ \phi$ are the contractions of the two rays of the Mori cone $\overline{\textnormal{NE}}(Z)$. Hence, no curve in $Z$ can be contracted by both $p_1$ and $\pi_2\circ \phi$. This means the map $$ (\pi_1, \pi_2\circ \phi): Z \,\longrightarrow\, \mathbb{P}^n \times \mathbb{P}^n$$
is finite. Hence $ n+rp\,=\, \textnormal{dim}\, Z\, \leq\, \textnormal{dim}\,(\mathbb{P}^n \times \mathbb{P}^n)\,=\, 2n.$ So, 
  \begin{equation}\label{Bequ4}
   rp\, \leq\, n. 
  \end{equation}
  Also, $ \pi_{1}{\mid_{X}}$, $ \pi_{2} \circ \phi{\mid_{X}}$ are two different contractions of $ X\,=\, \mathbb{P}(E).$ So, $X$ has two different $\mathbb{P}^p$-bundles structures over $\mathbb{P}^n$. By \cite{Sa}, we get 
  \begin{equation}\label{Bequ5}
  p\,\geq\, n-1.
  \end{equation}
  So, \eqref{Bequ4} and \eqref{Bequ5}
 give $  n\, \geq\, rp\, \geq\, r(n-1) \,\geq\, 2(n-1)$, hence $ n\,\leq\, 2$. As $ r\, \geq\, 2$, \eqref{Bequ4} gives $ p\,\leq\, 1,$ contradiction.
\end{proof}
\begin{proof}[Proof of Theorem \ref{autsym}]        
    We have an injective group homomorphism 
    $$ \textnormal{Aut}_{\mathbb{P}^n \rightarrow \mathbb{P}^n}(P) \,\xrightarrow{\,\,\,j\,\,\,}\, \textnormal{Aut}(S^{r}_{\mathbb{P}^n}P) $$ given by $$ \phi \longmapsto S_{\mathbb{P}^n}^{r}(\phi).$$ 
We want to show that $j$ is surjective. Let $ \phi \,\in\, \textnormal{Aut}(S_{\mathbb{P}^n}^{r}P).$ By Theorem \ref{sym}, $ \phi$ is over an automorphism of $ \mathbb{P}^n.$ Let $ X \subset S^{r}_{\mathbb{P}^n}P$ be the image of the diagonal embedding of $P$. As we saw in the proof of the Theorem \ref{sym}, $ X\,=\, \textnormal{Sing}^r ( S_{\mathbb{P}^n}^rP).$ So, $ \phi(X)\,=\,X$, hence $ \phi{\mid_{X}}\,\in\, \textnormal{Aut}_{\mathbb{P}^n\rightarrow \mathbb{P}^n}(P).$
Let $ \Phi \,=\,  \phi^{-1}\circ j (\phi{\mid_{X}})$. So, $ \Phi \,\in\, \textnormal{Aut}_{\mathbb{P}^n}(S_{\mathbb{P}^n}^{r}P),$ and $ \Phi{\mid_{X}}\,=\, id.$ For each $ x\,\in\, \mathbb{P}^n,$ if $ \Phi_{x}$ denotes the restriction of $ \Phi$ to the fiber $ S^{r}P_{x}$ over $x,$ then $ \Phi_{x}\,\in\, \textnormal{Aut}(S^{r}P_{x})$, and restriction of $ \Phi_{x}$ to the diagonal embedding of $P_{x}$ is identity. By  {\cite[Theorem 3.1.1]{Sh}}, we see that $ \Phi_{x}\,=\, id $ for all $x\,\in\, \mathbb{P}^n.$ So, $ \Phi \,=\, id,$ hence, $ \phi\,=\, j(\phi{\mid_{X}})\,\in\, \textnormal{im}(j).$ 
\end{proof}
\section{Acknowledgements}\vskip 0.5mm We thank Prof. János Kollár and Prof. Nagaraj D.S. for insightful discussions.
\printbibliography
\end{document}